\documentclass[a4paper, 12pt]{article}
\usepackage{graphicx}
\usepackage{tikz}
\usepackage{tikz-cd}
\usepackage{xstring}
\usepackage[normalem]{ulem}
\usepackage{cancel}
\usetikzlibrary{matrix,arrows,decorations.pathmorphing}
\tikzset{commutative diagrams/.cd}
\usepackage{pst-node}
\usepackage[english]{babel}
\usepackage{geometry}
\usepackage{amssymb,amsmath,amsfonts,bbm,pifont,upgreek,bbold,accents}  
\usepackage[colorlinks=true]{hyperref}
\hypersetup{urlcolor=blue, citecolor=red}

\font\teneufm=eufm10
\font\seveneufm=eufm7
\font\fiveeufm=eufm5
\newfam\eufmfam
\textfont\eufmfam=\teneufm
\scriptfont\eufmfam=\seveneufm
\scriptscriptfont\eufmfam=\fiveeufm

\newcommand\beq[1]{\begin{equation}\label{#1} }
\newcommand{\eeq}{\end{equation} }

\newcommand\beqa[1]{\begin{eqnarray} \label{#1}}
\newcommand{\eeqa}{\end{eqnarray} }
\newcommand{\beqano}{\begin{eqnarray*} }
\newcommand{\eeqano}{\end{eqnarray*} }

\newtheorem{theorem}{Theorem}

\newtheorem{definition}[theorem]{Definition}
\newtheorem{proposition}[theorem]{Proposition}
\newtheorem{lemma}[theorem]{Lemma}%[section]
\newtheorem{sublemma}[theorem]{Sublemma}%[section]
\newtheorem{remark}[theorem]{Remark}%[section]
\newtheorem{notationalremark}[theorem]{Notational Remark}%[section]
\newtheorem{corollary}[theorem]{Corollary}%[section]
\newtheorem{assumption}[theorem]{Assumption}%[section]
\newtheorem{claim}[theorem]{Claim}%[section]
\newtheorem{tools}{$\negsp\negsp$}[subsection]

\newcommand\thm[1]{\begin{theorem}\label{#1}}
\newcommand\thmtwo[2]{\begin{theorem}[#1]\label{#2}}
\newcommand\ethm{\end{theorem} }
\newcommand\dfn[1]{\begin{definition}\label{#1} \rm}
\newcommand\dfntwo[2]{\begin{definition}[#1]\label{#2} \rm}
\newcommand\edfn{\end{definition} }
\newcommand\pro[1]{\begin{proposition}\label{#1}}
\newcommand\protwo[2]{\begin{proposition}[#1]\label{#2}}
\newcommand\epro{\end{proposition} }
\newcommand\lem[1]{\begin{lemma}\label{#1}}
\newcommand\lemtwo[2]{\begin{lemma}[#1]\label{#2}}
\newcommand\elem{\end{lemma} }
\newcommand\sublem[1]{\begin{sublemma}\label{#1}}
\newcommand\sublemtwo[2]{\begin{sublemma}[#1]\label{#2}}
\newcommand\esublem{\end{sublemma} }
\newcommand\rem[1]{\begin{remark}\label{#1} \rm}
\newcommand\erem{\end{remark} }
\newcommand\notrem[1]{\begin{notationalremark}\label{#1} \rm}
\newcommand\enotrem{\end{notationalremark} }
\newcommand\cor[1]{\begin{corollary}\label{#1}}
\newcommand\cortwo[2]{\begin{corollary}[#1]\label{#2}}
\newcommand\ecor{\end{corollary} }
\newcommand\asmp[1]{\begin{assumption}\label{#1}}
\newcommand\asmptwo[2]{\begin{assumption}[#1]\label{#2}}
\newcommand\easmp{\end{assumption} }
\newcommand\clm[1]{\begin{claim}\label{#1}}
\newcommand\eclm{\end{claim} }
\expandafter\chardef\csname pre amssym.def
at\endcsname=\the\catcode`\@
\catcode`\@=11
\def\undefine#1{\let#1\undefined}
\def\newsymbol#1#2#3#4#5{\let\next@\relax
 \ifnum#2=\@ne\let\next@\msafam@\else
 \ifnum#2=\tw@\let\next@\msbfam@\fi\fi
 \mathchardef#1="#3\next@#4#5}
\def\mathhexbox@#1#2#3{\relax
 \ifmmode\mathpalette{}{\m@th\mathchar"#1#2#3}%
 \else\leavevmode\hbox{$\m@th\mathchar"#1#2#3$}\fi}
\def\hexnumber@#1{\ifcase#1 0\or 1\or 2\or 3\or 4\or 5\or 6\or 7\or
8\or
 9\or A\or B\or C\or D\or E\or F\fi}
\ifcase\@ptsize
 \font\tenmsb=msbm10
 \font\sevenmsb=msbm7
 \font\fivemsb=msbm5
\or
 \font\tenmsb=msbm10 scaled \magstephalf
 \font\sevenmsb=msbm7 scaled \magstephalf
 \font\fivemsb=msbm5  scaled \magstephalf
\or
 \font\tenmsb=msbm10 scaled \magstep1
 \font\sevenmsb=msbm7 scaled \magstep1
 \font\fivemsb=msbm5 scaled \magstep1
\fi
\newfam\msbfam
\textfont\msbfam=\tenmsb
\scriptfont\msbfam=\sevenmsb
\scriptscriptfont\msbfam=\fivemsb
\edef\msbfam@{\hexnumber@\msbfam}
\def\Bbb#1{\fam\msbfam\relax#1}
\def\widehat#1{\setboxz@h{$\m@th#1$}%
 \ifdim\wdz@>\tw@ em\mathaccent"0\msbfam@5B{#1}%
 \else\mathaccent"0362{#1}\fi}
\def\widetilde#1{\setboxz@h{$\m@th#1$}%
 \ifdim\wdz@>\tw@ em\mathaccent"0\msbfam@5D{#1}%
 \else\mathaccent"0365{#1}\fi}

\def\RIfM@{\relax\ifmmode}
\def\nonmatherr@#1{\errmessage{\string#1\space allowed only in math mode}}
\def\Bbb{\RIfM@\expandafter\Bbb@\else
 \expandafter\nonmatherr@\expandafter\Bbb\fi}
\def\Bbb@#1{{\Bbb@@{#1}}}
\def\Bbb@@#1{\fam\msbfam\relax#1}
\def\setboxz@h{\setbox\z@\hbox}
\def\wdz@{\wd\z@}
\catcode`\@=\csname pre amssym.def at\endcsname
\newcommand{\negsp}{\hspace{-.09truecm}}  %%% equivalente a \!

\newcommand{\integer}{{\Bbb Z}   }
\newcommand{\complex}{{\Bbb C}   }

\newcommand{{\cH}}{{\cal H} }

 \title{\bf On Coble surfaces and their automorphisms}
 \date{}
 \author{Federico Pieroni\\
 University Roma Tre, Rome\\
{\small \tt federico.pieroni@uniroma3.it}}
 
\begin{document}
  \maketitle
\tableofcontents
\newpage

\begin{abstract}
In this article, we consider a smooth, complex Coble surface $X$, with a connected boundary curve $C \in |- 2 K_X|$. In~\cite{Pompilj1940}, Pompilj defined a biregular automorphism $T : X \to X$, and he claimed that the restriction $T|_C : C \to C$ equals the identity $\mathbb{1}_C$ on every Coble surface $X$. Coble~\cite{Coble1939} and Dolgachev~\cite{Dolg2019} proved that the family of Coble surfaces where this happens is a proper, locally closed subset of the moduli spaces of Coble surfaces.\\
Our goal is to show that this family has two components: one of them is made up of nodal Coble surfaces, and moreover the stronger equality $T = \mathbb{1}_X$ holds on every surface $X$ of this component, while the other component has codimension $3$ in the moduli space of Coble surfaces.
\end{abstract}

\section*{Introduction}

\addcontentsline{toc}{section}{Introduction}
%Coble surfaces were originally defined by A. Coble in an article of 1919, as the blow up of $\mathbb{P}^2$ of $10$ points $p_1, \ldots, p_{10}$ which are nodes for an irreducible sextic plane curve $\overline C \subset \mathbb{P}^2$. Nowadays, we use a different definition, which is purely stated in terms of divisors. 
In what follows, a surface is a smooth, projective, irreducible variety $X$ of dimension $2$ over the complex field $\complex$. In this setting, we say that $X$ is a Coble surface if it is rational and its canonical divisor $K_X$ satisfies $|- K_X| = \emptyset$ and $|- 2 K_X| = \{C\}$. This curve $C$ is known as the boundary of the surface $X$, and one can prove (see~\cite{CobleRatSurf2001}) that all its components are rational curves. The goal of this article is to study a particular family of Coble surfaces which are special from the point of view of their group of biregular automorphisms.
% We will  prove  a weaker version of this statement (see Theorem~\ref{basic}), starting from the hypothesis that $C$ is smooth.\\ 
The simplest example of a Coble surface was given by Coble in~\cite{OriginalCoble1919}, and it consists of the blow up of $\mathbb{P}^2$ at $10$ points $p_1, \ldots, p_{10}$ which are nodes for an irreducible sextic plane curve $\overline C \subset \mathbb{P}^2$. This construction also corresponds to the original definition given by Coble. We will see in Remark~\ref{act} that such a blow up always satisfies the axioms for Coble surface. In all this work, we will focus our attention on this model of Coble surface.\\
In this case, the boundary curve $C \subset X$ is given just by the strict transform of the sextic $\overline C$. In particular, in the Picard lattice $Pic (X)$ the class of $C$ is $C = - 2 K_X,$ and thus it is an invariant of the surface itself. As a consequence, every automorphism $\phi : X \to X$ carries $C$ to itself, that is, $\phi (C) = C$. It is natural then to ask if there are automorphisms $\phi$ of $X$ which pointwise fix the entire curve $C$. This question, known as Coble conjecture, is still an open problem. Coble expected that, for the general Coble surface, defined via a general $\overline C$ as above, there are no such automorphisms.\\
In~\cite{Pompilj1940}, Pompilj claimed that the conjecture was false: he constructed a non identical automorphism $T$ which is well defined on the general Coble surface, and he claimed that the equality \begin{eqnarray}\label{PC}T|_C = \mathbb{1}_C\end{eqnarray} was always true.\\
Coble~\cite{Coble1939}, and more recently Dolgachev~\cite{Dolg2019}, showed that equality~\eqref{PC} is generically false on a Coble surface, and it becomes true only along a suitable family defining a proper, locally closed subset in the moduli space of Coble surfaces.\\
The main result of this article is Theorem~\ref{final}. It states that this family has two components. Every member $X$ of the first component is nodal, that is, there exists an irreducible, smooth, rational curve $R \subset X$ such that $R^2 = -2$. Moreover, if equality~\eqref{PC} holds, then $T = \mathbb{1}_X$.\\
The second component has codimension $3$ in the moduli space of Coble surfaces. For any Coble surface $X$ in this component, we are able to give the explicit equation(see Lemma~\ref{codtre}) of a quintic surface $\overline X \subset \mathbb{P}^3$ which is a birational model for $X$.\\
The proof is structured in the following steps: In Section $1$ we recall some standard properties of the Del Pezzo surfaces of degree $1$ or $2$ and of the Coble surfaces, which we will use later.\\
Section $2$ is dedicated to a technical result: we take a Coble surface $X$ with irreducible boundary curve $C \in |- 2 K_X|$, together with three disjoint $(-1)$-curves $E_1, E_2, E_3 \subset X$. Then we describe the linear system defined by the divisor $H := C + E_1 + E_2 + E_3$. By adjunction formula, the three line bundles ${\cal O}_C (E_i)$ have degree $2$ on $C \simeq \mathbb{P}^1$. Moreover the image of the restriction map $$ H^0(\mathcal O_X(E_i)) \to H^0(\mathcal O_C(E_i)), \ i = 1,2,3, $$
is a $1$-dimensional subspace, generated by a vector $v_i$. The goal is to show that if $v_1, v_2, v_3$ are linearly dependent, then  $X$ admits $(-2)$-curves, whose classes are explicitly defined  in the Picard lattice $Pic (X)$. As a corollary, we will see how the linear system $|H|$ behaves on unnodal Coble surfaces. \\
In Section $3$ we prove our main result. As Coble and Dolgachev did, we first note that the automorphism $T$ decomposes as $$T = (i_3 \circ i_2 \circ i_1)^2,$$ where the $i_k$'s are three Bertini involutions. The proof of this fact is straight-forward, based on the very definition of $T$ given by Pompilj.\\
This means that the three restrictions $\sigma_k := (i_k)|_C : C \to C$, $k = 1,2,3$, are involutions on $C \simeq \mathbb{P}^1$, such that $$(\sigma_3 \circ \sigma_2 \circ \sigma_1)^2 = \mathbb{1}_C.$$ 
At this point, we distinguish two cases: if $\sigma_3 \circ \sigma_2 \circ \sigma_1 \ne \mathbb{1}_C$, Coble's strategy is to apply Pascal's Theorem~\ref{pascal}, to deduce a non - trivial condition of linear dependence on the degree two effective divisors $f_1, f_2, f_3$ which are respectively supported on the fixed points of $\sigma_1, \sigma_2, \sigma_3$. We write down explicitly this condition, showing that this implies that there are three disjoint $(-1)$-curves $E_1, E_2, E_3 \subset X$ whose restrictions $e_k  := E_k \cdot C$ belong to the same pencil of degree $2$ divisors of $C$. \\
At this point, by Section $2$ we know that $X$ admits $(-2)$-curves. The expression of these curves in $Pic (X)$ will give us informations on the mutual positions of the $10$ nodes, so that we will be able to state that the three Bertini involutions $i_1, i_2, i_3$ actually coincide, proving that $T = \mathbb{1}_X.$\\
In the other case we have $\sigma_3 \circ \sigma_2 \circ \sigma_1 = \mathbb{1}_C$, and we will still be able to recall the projective model described in Section $2$ to show that this happens in a $3$-codimensional family of Coble surfaces. 
\phantomsection
\addcontentsline{toc}{section}{Acknowledgements}
\paragraph{Acknowledgements}
The author is grateful to Prof. I. Dolgachev for his advice~\cite{Dolg2019} on Coble surfaces, and on the related problems addressed in this paper. In the same way the author is  grateful to his Ph.D. supervisor, Prof. A. Verra,  for countless inspiring and highlighting discussions on this subject. This work is partially based on the author's Ph.D. thesis~\cite{Pieroni2025}.
\newpage
\section{Basic properties}
\subsection{The Geiser birational involution}
In this subsection we will recall some standard facts about the Geiser involution on the Del Pezzo surface of degree $2$. For further details, see~\cite{Bayle2000}.\\
Let $p_1, \ldots, p_7$ be sufficiently general points in $\mathbb{P}^2$, and let $$Z := Bl_{p_1, \ldots, p_7} \mathbb{P}^2$$ be their  blow - up. This surface is called a Del Pezzo surface of degree $2$. Since $p_1, \ldots, p_7$ are sufficiently general, the linear system $|- K_Z|$ has no base locus and satisfies $h^0 ({\cal O}_Z (- K_Z)) = 3$. Moreover, it follows that $(- K_Z)^2 = 2$ and the anticanonical linear system $|- K_Z|$ induces a finite double covering $$q : Z \to \mathbb{P}^2.$$ The biregular involution associated to this double cover is called the Geiser involution of $Z$. We will denote such an involution by $$i_G : Z \to Z.$$ 
Every element ${\cal E} \in |- K_Z|$ is a curve of arithmetic genus $1$, moreover $q \vert \mathcal E$ is a finite double cover of a line $l \subset \mathbb{P}^2$. Hence, for a general $l$, $q \vert \mathcal E$ is branched at exactly four simple points and the branch curve of $q$ is a smooth quartic curve. On the curve ${\cal E}$, the Geiser involution $i_G$ takes the form $$z + i_G (z) \in |{\cal O}_{\cal E} (- K_Z)|, \quad \forall z \in {\cal E}.$$

\begin{proposition}\label{fixgeiser} Keeping the previous notation, a member ${\cal E} \in |- K_Z|$ is singular at $p$ if and only if $i_G (p) = p.$
\end{proposition}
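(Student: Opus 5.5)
The approach is to describe every member of $|-K_Z|$ as a pull-back under the double cover $q$ and then compute locally. I read the statement as: \emph{there exists} a member ${\cal E} \in |-K_Z|$ passing through $p$ and singular at $p$ if and only if $i_G(p)=p$. A fixed member through a fixed point need not be singular there, so the statement cannot mean that every member through $p$ is singular at $p$.

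\textbf{Step 1: members are pull-backs of lines.} Since $h^0({\cal O}_Z(-K_Z))=3$ and $q$ is the morphism defined by $|-K_Z|$, every member has the form ${\cal E}=q^*(l)$ for a unique line $l\subset\mathbb{P}^2$. The members through $p$ correspond to the pencil of lines through $q(p)$.

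\textbf{Step 2: fixed points of $i_G$ are the ramification points.} The fixed locus of $i_G$ is exactly the ramification locus $R=q^{-1}(B)$ of $q$, where $B$ is the smooth branch quartic recalled above. This holds because $i_G$ is the covering involution of the double cover $q$.

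\textbf{Step 3: $p$ not fixed.} Suppose $p\notin R$. Then $q$ is étale at $p$, hence a local analytic isomorphism near $p$. So ${\cal E}=q^*(l)$ is, near $p$, isomorphic to the smooth curve $l$ near $q(p)$. Therefore no member of $|-K_Z|$ is singular at $p$.

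\textbf{Step 4: $p$ fixed.} Suppose $p\in R$. Since $B$ is smooth at $q(p)$, I would choose local analytic coordinates $(u,v)$ at $q(p)$ with $B=\{v=0\}$, and coordinates $(x,y)$ at $p$ in which $q(x,y)=(x,y^2)$. This is the standard normal form of a double cover branched along a smooth curve.

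A line through $q(p)$ has local equation $au+bv+(\text{higher order})=0$, and $\{au+bv=0\}$ is its tangent line. Its pull-back has local equation
\[
ax+by^2+\ldots=0 .
\]
This curve is singular at the origin if and only if $a=0$, that is, if and only if $l$ is the tangent line to $B$ at $q(p)$. Such a line exists, so the member $q^*(T_{q(p)}B)$ is singular at $p$. This proves both directions.

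\textbf{Main obstacle.} The only delicate point is justifying the local normal form in Step 4. This needs $B$ to be smooth at $q(p)$, which holds for sufficiently general $p_1,\ldots,p_7$ as stated above. It also needs the standard fact that a double cover branched over a smooth curve is smooth and locally given by $y^2=v$. Once the normal form is available, the rest is the elementary local computation above.
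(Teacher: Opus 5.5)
Your proof is correct and follows the same route as the paper: at a non-fixed point $q$ is \'etale, so every member ${\cal E}=q^*(l)$ through $p$ is smooth there, and at a fixed point the singular member is $q^*$ of the tangent line to the branch quartic $B$ at $q(p)$. Your local normal form $q(x,y)=(x,y^2)$ and your explicit ``there exists a member'' reading make precise what the paper's short proof leaves implicit.
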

{\it Proof:} Away from the fixed locus of $i_G$, the quotient map $q : Z \to \mathbb{P}^2$ is an \'etale local isomorphism. As a consequence, if $i_G (p) \ne p$ and
$p \in \mathcal E$, then $\mathcal E$ is smooth at $p$ since $l = q(\mathcal E)$ is smooth at $q(p)$.\\
Conversely, if $p$ is fixed by $i_G$, then its image $q (p) \in \mathbb{P}^2$ belongs to the quartic branch curve $B$ of $q$. If $l$ denotes the tangent line to $B$ at $q(p)$, then $\mathcal E = q^* (l)$ is  singular at $p$. $\square$

\subsection{The Bertini birational involution}
In this subsection we will recall some standard facts about the Bertini involution on the Del Pezzo surface of degree $1$. As in the previous subsection, we refer to~\cite{Bayle2000} for further details.\\
Let $p_1, \ldots, p_8 \in \mathbb{P}^2$ be $8$ points \it in general position\rm, and let $$Y := Bl_{p_1, \ldots, p_8} \mathbb{P}^2$$ be the blowing up of $\mathbb P^2$ at these points. This surface is a Del Pezzo surface of degree $1$. In particular its linear system $|- 2 K_Y|$ has no base points, and $h^0 ({\cal O}_Y (- 2 K_Y)) = 4.$ Let us consider the rational map $$f: Y \to \mathbb{P}^3,$$
defined by $|-2K_Y|$. Then, as is well known, $f$ is a finite double cover of a quadric cone $\overline S = f(Y) \subset \mathbb{P}^3$ and defines a biregular involution $$i_B : Y \to Y.$$
Let $\sigma: Y \to \mathbb P^2$ be the blowing up of $p_1 \dots p_8$. Then the birational involution
$$
\sigma \circ i_B \circ \sigma^{-1}: \mathbb P^2 \dasharrow \mathbb P^2
$$
is the classical Bertini involution associated to the points $p_1 \dots p_8$.\\
Each curve $D \in |- 2 K_Y|$ is a double cover of a hyperplane section of the cone $\overline S$, hence $$i_B (D) = D.$$ The arithmetic genus $p_a (D)$ equals $2$; when $D$ is smooth, the action of $i_B$ on $D$ is given by $$|{\cal O}_D (y + i_B (y))| = |{\cal O}_D (K_D)| \quad \forall y \in D.$$
The linear system $|- K_Y|$ satisfies $$h^0 ({\cal O}_Y (- K_Y)) = 2$$ and $$(- K_Y)^2 = 1.$$ Thus it consists of a pencil of elliptic curves, with one base point $p_b \in Y$. This point is fixed by the Bertini involution $i_B$, and its image in $\mathbb{P}^3$ is the vertex $v$ of the cone $\overline S$.\\ Every divisor ${\cal E} \in |- K_Y|$ is a double cover of a generating line of the cone $\overline S$, and consequently $$i_B ({\cal E}) = {\cal E}.$$ When ${\cal E}$ is smooth, the action of $i_B$ on ${\cal E}$ is given by $$|{\cal O}_{\cal E} (y + i_B (y))| = |{\cal O}_{\cal E} (2 p_b)| \quad \forall y \in {\cal E}.$$
The image under the multiplication map  
$$\mu: Sym^2 H^0 ({\cal O}_Y (- K_Y)) \to H^0 ({\cal O}_Y (- 2 K_Y))$$ has codimension $1$, and it corresponds to sections of $|- 2 K_Y|$ which split as a sum ${\cal E}_1 + {\cal E}_2$ of two sections of $|- K_Y|$. It also coincides with the subspace of sections of $|- 2 K_Y|$ passing through $p_b$.\\
The fixed locus of $i_B$ consists of the isolated point $p_b$ and a ramification curve $R \in |- 3 K_Y|$, which is a non - hyperelliptic curve of genus $4$. On the cone $\overline S$, the branch locus is given by their isomorphic images, namely the vertex $v$ and a curve $B \subset \overline S$ of degree $6$, obtained as complete intersection of $S$ with a cubic surface of $\mathbb{P}^3$.

\begin{proposition}\label{fix}
Let $Y$ be a Del Pezzo surface of degree $1$, with its Bertini involution $$i_B : Y \to Y.$$ Assume that $p \in Y$ is a double point for an irreducible, reduced divisor $D \in |- 2 K_Y|$.\\
Then $p$ is a non - isolated fixed point for $i$, and the tangent directions of $D$ at $p$ are swapped by $i$.
\end{proposition}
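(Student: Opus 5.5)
The approach is to mimic the proof of Proposition~\ref{fixgeiser}, using the double cover $f : Y \to \overline S \subset \mathbb{P}^3$ defined by $|-2K_Y|$. The curve $D$ is the pullback $f^*(h)$ of a hyperplane section $h = \overline S \cap \Pi$, where $\Pi \subset \mathbb{P}^3$ is a plane. I will first dispose of the point $p_b$: it is an isolated fixed point, and we must show it cannot be the double point $p$. If $p = p_b$, then $\Pi$ passes through the vertex $v$, so $h$ is a union of two generating lines of the cone. Then $D = {\cal E}_1 + {\cal E}_2$ with ${\cal E}_i \in |-K_Y|$ (this is the description of the image of $\mu$ recalled above), which contradicts the irreducibility of $D$. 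So $p \neq p_b$, and $f(p) \neq v$ is a smooth point of the cone $\overline S$.

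Next I argue that $i_B(p) = p$. Suppose instead $i_B(p) \ne p$. Away from the fixed locus $\{p_b\} \cup R$, the map $f$ is étale, hence a local isomorphism near $p$. Then $D = f^*(h)$ is locally isomorphic near $p$ to $h$ near $f(p)$. Now $h$ is a plane section of the cone not through $v$, so it is a smooth conic. Hence $D$ would be smooth at $p$, a contradiction. Therefore $p$ is fixed, and since $p \neq p_b$, it lies on the ramification curve $R$, so it is a non-isolated fixed point.

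Finally, I check that the tangent directions are swapped. Near $p \in R$, choose local analytic coordinates $(x,y)$ on $Y$ in which $i_B(x,y) = (x,-y)$ and $R = \{y=0\}$. Then $(x, t = y^2)$ are local coordinates on $\overline S$ at $f(p)$. The section $h$ is a smooth curve through $f(p)$, given by $g(x,t)=0$, and $D$ is given by $g(x,y^2) = 0$. Since $D$ is singular at $p$, we get $\partial_x g(0,0) = 0$, so $\partial_t g(0,0) \neq 0$ and $h$ is tangent to the branch curve $B$. Writing $g = a t + b x^2 + \dots$ with $a \neq 0$, the tangent cone of $D$ at $p$ is $a y^2 + b x^2 = 0$. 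This cone is invariant under $y \mapsto -y$, which exchanges its two lines $y = \pm\sqrt{-b/a}\, x$.

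The main obstacle is the degenerate case $b = 0$, where the tangent cone is the double line $y^2=0$. In that case the two tangent directions coincide with the $x$-axis, i.e. with the tangent to $R$. This line is fixed by $i_B$, so "swapped" holds only in the trivial sense that the two coincident directions are exchanged. Alternatively, one could exclude this case by showing that it forces $p$ to be worse than a double point, or forces $D$ to be reducible. I expect to handle it by reading "double point" as a node, or by simply noting that the statement remains true with coincident directions.
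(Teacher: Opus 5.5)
Your proof is correct, and its first two steps are the paper's. Irreducibility of $D$ keeps the plane away from the vertex $v$, so $h$ is a smooth conic. The étale argument then forces $i_B(p)=p$, hence $p\in R$.

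For the tangent directions you take a different, sharper route. The paper linearizes $i_B$ as $(x,y)\mapsto(x,-y)$ and takes the local equation of $D$ to be an eigenvector. It discards the anti-invariant case because $y$ would divide the equation, so $D$ would share a component with $R$, and then factors the invariant quadratic part. You instead pull back the smooth local equation $g(x,t)$ of $h$ via $t=y^2$. Invariance is then built in: you need neither the eigenvector normalization nor a second use of irreducibility. Moreover, smoothness of $h$ gives $\partial_t g(0,0)\neq 0$, so the coefficient of $y^2$ in the tangent cone never vanishes. Hence the tangent cone is never the double line transverse to $R$. The only degenerate case is when $h$ meets $B$ with contact order at least $3$, where $D$ has a cusp or worse whose unique tangent is $T_pR$. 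The paper's factorization $(\alpha x+\beta y)(\alpha x-\beta y)$ passes over this case silently.

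Your main resolution of the degenerate case is the right one: coincident directions are trivially ``swapped'', or one reads ``double point'' as ``node''. Nodes are what Corollary~\ref{lift}\,ii) needs, since the $p_i$ are nodes of $\overline C$. The alternative you float, excluding the case via reducibility or higher multiplicity, would fail. The osculating plane of $B$ at a general point misses $v$ and cuts out an irreducible $D$ with an ordinary cusp.
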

{\it Proof:} The curve $D$ is a double cover of an hyperplane section $H \cap S$ of the quotient quadric cone $S \subset \mathbb{P}^3$. Since $D$ is reduced and irreducible, the plane $H$ does not pass through the vertex $v \in S$, and hence $H \cap S$ is a smooth conic curve.\\
Since the curve $D$ is preserved by $i_B$, the point $i_B (p)$ is still a singularity of $D$. Moreover, the quotient map $q : Y \to S$ is a local etale isomorphism away from the fixed locus of $i_B$. Hence, if $i_B (p) \ne p$, then $q(p)$ would be a singularity of the quotient curve $q (D) = H \cap S,$ a contradiction. Thus $$i_B (p) = p.$$
Since $p_b$ is the only isolated fixed point in $Y$, this shows also that $p$ belongs to the fixed curve $R \subset Y$. Since $R$ is smooth, we can find an etale neighborhood $\Delta$ of $p$, with coordinates $(x, y) \in \Delta$, where $p = (0, 0)$ and the local equation of $R$ is given by $y = 0$. In these coordinates, the involution $i$ takes the form $$i (x, y) = (x, - y).$$ Let $$D = \{f (x, y) = 0\}$$ be a local equation for $D$ around $p$. Since $D$ is $i$-invariant, $f$ must be an eigenvector for $y \to - y$. If $f$ is a $(-1)$-eigenvector, all the monomials in $f$ have the form $\lambda_{h, k} x^h y^{2 k + 1}$. In particular $y$ divides $f$, so $D$ should share a common component with $R$, a contradiction with the irreducibility of $D$.\\
Hence $f$ is a $(+ 1)$-eigenvector for $i$, so all its monomials have the form $\lambda_{h, k} x^h y^{2 k}$. Moreover, $f$ must start with quadratic terms, since $D$ has multiplicity $2$ at $p$. This means: $$f (x, y) = a x^2 + b y^2 + {\cal I}_{(x, y)}^3, \quad (a, b) \ne (0, 0),$$ where ${\cal I}_{(x, y)}^3$ denotes terms which vanish at the origin with order at least $3$. If $\alpha, \beta \in \complex$ satisfy $\alpha^2 = a, \beta^2 = - b$, then $$f = (\alpha x + \beta y) (\alpha x - \beta y) + {\cal I}_{(x, y)}^3,$$ so clearly $i$ swaps the two tangent directions $\alpha x \pm \gamma y = 0$. $\square$

\begin{proposition}\label{liftgeiser}
Note that, if $Z$ is a Del Pezzo surface of degree $2$, and $p \in Z$ is fixed by the Geiser involution $i_G : Z \to Z$, then the Bertini involution $i_B$ of the Del Pezzo surface $Y := Bl_p Z$ is the lift of $i_G$.
\end{proposition}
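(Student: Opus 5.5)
Let $\pi : Y = Bl_p Z \to Z$ be the blow-up at $p$, with exceptional curve $E$. The statement means that $i_B$ and $i_G$ are $\pi$-compatible, i.e. $\pi \circ i_B = i_G \circ \pi$. Since $p$ is fixed by $i_G$, the biregular involution $i_G$ lifts uniquely to a biregular involution $\tilde i$ of $Y$: a biregular map fixing the centre of a blow-up lifts to the blow-up, and the lift preserves $E$. The plan is to show $\tilde i = i_B$. Both are biregular automorphisms of $Y$, so it is enough to compare them on a dense open set, or through their action on curves.

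The key steps are as follows.
\begin{enumerate}
\item Relate the two anticanonical systems. We have $K_Y = \pi^* K_Z + E$, so $|-K_Y|$ is identified with the pencil of curves in $|-K_Z|$ passing through $p$, and the base point $p_b$ lies on $E$.
\item Use the map $q$. The curves of this pencil are the preimages $q^*(l)$ of the lines $l \subset \mathbb{P}^2$ through the point $q(p)$. Each such curve is $i_G$-invariant, so every member $\mathcal{E} \in |-K_Y|$ is $\tilde i$-invariant.
\item Identify the action on a general member. Take a general smooth $\mathcal{E} \in |-K_Y|$; its image $\pi(\mathcal{E}) = \mathcal{E}'$ is smooth through $p$ and isomorphic to $\mathcal{E}$. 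By Proposition~\ref{fixgeiser}, a member of $|-K_Z|$ through $p$ is singular at $p$ only if it is the single curve tangent to the branch direction, so the general member is smooth, as claimed.
\item Compare on $\mathcal{E}'$. There $i_G$ acts by $z + i_G(z) \in |\mathcal{O}_{\mathcal{E}'}(-K_Z)|$. Since $\mathcal{E}' = q^*(l)$ and $l$ passes through $q(p)$, the divisor $2p$ lies in this linear system (as $q^*(q(p)) = 2p$ on $\mathcal{E}'$, the point $p$ being a ramification point). So $z + i_G(z) \sim 2p$ on $\mathcal{E}'$.
\item Match base points. On $\mathcal{E} \subset Y$, the point $p$ corresponds to $\mathcal{E} \cap E$, which is exactly the base point $p_b$ of $|-K_Y|$. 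Hence $\tilde i$ acts on $\mathcal{E}$ by $y + \tilde i(y) \sim 2p_b$, which is precisely the description of $i_B$ on $\mathcal{E}$ recalled above.
\item Conclude. The two involutions agree on every general member of the pencil, which covers a dense subset of $Y$, so $\tilde i = i_B$ as biregular maps.
\end{enumerate}

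The main obstacle is justifying that the involution on $\mathcal{E}$ is determined by the linear equivalence $y + \sigma(y) \sim 2p_b$. On a smooth genus $1$ curve, $y \mapsto$ the unique $y'$ with $y + y' \sim 2p_b$ is a well-defined involution, since $h^0(\mathcal{O}_{\mathcal{E}}(2p_b) ) = 2$ and so the residual point is unique. The remaining care is to check that $p_b$ indeed equals $\mathcal{E} \cap E$. This holds because every member of the pencil meets $E$ in exactly one point, as $-K_Y \cdot E = 1$, and this point is common to all members, being the tangent direction at $p$ determined by the fibre.

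Alternatively, one can argue via the anticanonical models: $|-2K_Y|$ contains $\pi^*$ of conics through $q(p)$. The map $f$ then factors as the double cover $q$ composed with the projection of the Veronese surface from the point $q(p)$, so the covering involutions coincide. I would use this as a cross-check.
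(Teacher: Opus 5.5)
Your argument is correct and is essentially the paper's proof. It lifts $i_G$ through the fixed point $p$ and identifies $|-K_Y|$ with the curves $q^*(l)$ for lines $l \ni q(p)$. It then uses that $p$ is a ramification point to get $z + i_G(z) \sim 2p$ on a smooth member, matches $p$ with the base point $p_b \in E$, and concludes by density; the paper does the matching via $\mathcal{E}_1 \cap \mathcal{E}_2 = 2p$, while you use the common tangent direction at $p$ together with $-K_Y \cdot E = 1$. Only your optional cross-check is inaccurate: $|-2K_Y|$ consists of pullbacks of conics through $q(p)$ that are also tangent there to the branch quartic, so the relevant projection of the Veronese surface is from a tangent line at the image of $q(p)$, not from the point itself, since projecting from the point lands in $\mathbb{P}^4$ rather than on a quadric cone in $\mathbb{P}^3$.
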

{\it Proof:} We first claim that any two curves ${\cal E}_1, {\cal E}_2 \in |- K_Z|$ through $p$ satisfy: \begin{eqnarray}\label{sametg}{\cal E}_1 \cap {\cal E}_2 = 2 p.\end{eqnarray}
To prove this, we take the quotient map $$q : |- K_Z| :  Z \to \mathbb{P}^2.$$ Let $$b := q (p) \in \mathbb{P}^2.$$ Since $p$ is fixed by the Geiser involution, $b$ is a branch point for $q$. Then ${\cal E}_1, {\cal E}_2$ are double covers of two lines $l_1, l_2 \subset \mathbb{P}^2$ through $b$. We write the intersection $$l_1 \cap l_2 = b,$$ and since $b$ is a branch point, applying $q^*$ on both sides we get equality~\eqref{sametg}.\\
Let $\pi : Y \to Z$ be the blow - down, and let $E \subset Y$ be the exceptional curve over $p$. Equality~\eqref{sametg} implies that all curves in $|- K_Z|$ through $p$ have the same tangent direction at $p$, and hence there exists a point $p_b \in E$ which lies in all the strict transforms of these curves. But these strict transforms form the linear system $|- \pi^* K_Z - E| = |- K_Y|$, hence $p_b$ is exactly the base point of $|- K_Y|$.\\
Now we take a smooth curve ${\cal E'} \in |- K_Y|$, and we note that $${\cal E} := \pi ({\cal E'}) \in |- K_Z|$$ is sill smooth. Of course, the restriction $$\pi_{\cal E'} : {\cal E'} \to {\cal E}$$ is an isomorphism, and it satisfies $$\pi_{\cal E'} (p_b) = p.$$ Since $p$ is fixed by $i_G$, it is a Weierstrass point for the restricted double cover $q : {\cal E} \to \mathbb{P}^1$. Thus, on the curve elliptic curve ${\cal E}$ the Geiser involution $i_G$ is defined by: $$z + i_G (z) \in |{\cal O}_{\cal E} (2 p)| \quad \forall z \in {\cal E}.$$ 
If $i_G'$ denotes the lift of $i_G$ on ${\cal E'}$, we have then: $$y + i_G'(y) \in |{\cal O}_{\cal E'} (\pi_{\cal E'}^* (2 p))| = |{\cal O}_{\cal E'} (2 p_b)| \quad \forall y \in {\cal E'}.$$ This is the same relation defining the Bertini involution $i_B$ on ${\cal E'}$, thus these two involutions agree on ${\cal E'}$. Since the smooth members of $|- K_Y|$ are dense in $Y$, this concludes the proof. $\square$

\subsection{Coble surfaces}
\begin{definition}
We say that $X$ is a Coble surface if $X$ is  a  smooth, irreducible, projective rational surface, with $$|- K_X| = \emptyset$$ and $$|- 2 K_X| = \{C\}.$$ The unique anti-bicanonical curve $C$ is called the boundary curve of $X$. 
\end{definition}
The next example gives the simplest construction for a Coble surface, and it also corresponds to the first one which was historically known, see~\cite{OriginalCoble1919}.
\begin{remark}\label{act}
Assume that $\overline C \subset \mathbb{P}^2$ is an irreducible curve of degree $6$, with $10$ double points $p_1, \ldots, p_{10}$. Then the blow - up $$X := Bl_{p_1, \ldots, p_{10}} \mathbb{P}^2$$ is a Coble surface, and the anti - bicanonical curve $C$ is exactly the strict transform of $\overline C$.\\
Indeed, the Picard lattice $Pic (X)$ has a standard representation $$Pic (X) = \integer L \oplus \bigoplus_{i = 1}^{10} \integer E_i,$$ where $L \subset X$ is the pull - back of any line, and $E_i \subset X$ is the exceptional divisor associated to the point $p_i$. In this basis, the class of $C$ equals $$C = 6 L - 2 E_1 - \cdots - 2 E_{10} = - 2 K_X.$$ By adjunction formula $$p_a (\overline C) = 10,$$ and hence $$p_g (\overline C) = 0.$$ Consequently $$C \simeq \mathbb{P}^1.$$ Moreover, we can also compute the self - intersection $$C^2 = -4.$$ Consider the short exact sequence $$0 \to {\cal O}_X \to {\cal O}_X (C) \to {\cal O}_C (C) \to 0.$$ The last term equals $${\cal O}_C (C) \simeq {\cal O}_{\mathbb{P}^1} (C^2) = {\cal O}_{\mathbb{P}^1} (-4),$$ thus we find an isomorphism $$\complex = H^0 ({\cal O}_X) \simeq H^0 ({\cal O}_X (C)).$$ This proves that $|- 2 K_X|$ does not contain other divisors. Consequently, the anti - canonical system $|- K_X|$ must be empty: if it is not, an effective curve ${\cal E} \in |- K_X|$ would give a member $2 {\cal E} \in |- 2 K_X|$, contradicting the rigidity of $C$.
\end{remark}
Some general properties of Coble surfaces are given by the following Proposition:
\begin{proposition}\label{basic}
Let $\{C\} = |- 2 K_X|$ be the Coble curve in a Coble surface $X$, with irreducible decomposition $C = C_1 + \cdots + C_n$.\\
If the divisor $C$ is smooth, then:\\
i) the $C_i$'s are smooth rational curves,\\
ii) $C_i^2 = -4$,\\
iii) $K_X^2 = - n$.\\
iv) $h^1 ({\cal O}_X (- K_X)) = n - 1.$
\end{proposition}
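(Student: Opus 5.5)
Since $C$ is smooth, its irreducible components $C_1, \ldots, C_n$ are pairwise disjoint smooth curves. The plan is to get i) and ii) from adjunction together with the rigidity of $C$, then iii) by summing, and iv) from Riemann--Roch together with a vanishing argument.

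\textbf{Steps i) and ii).} Fix a component $C_i$. Disjointness gives $C \cdot C_i = C_i^2$. Since $C \sim -2K_X$, we get $K_X \cdot C_i = -\tfrac12 C_i^2$. Adjunction then gives
$$2g(C_i) - 2 = C_i^2 + K_X \cdot C_i = \tfrac12 C_i^2 .$$
We still need $g(C_i) = 0$; the paper quotes this from~\cite{CobleRatSurf2001}, but I would try to prove it directly. From the exact sequence
$$0 \to {\cal O}_X \to {\cal O}_X(C) \to {\cal O}_C(C) \to 0$$
and $h^1({\cal O}_X) = 0$ (because $X$ is rational), the condition $h^0({\cal O}_X(C)) = 1$ forces $h^0({\cal O}_C(C)) = 0$. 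By disjointness ${\cal O}_C(C)$ restricts on each $C_i$ to ${\cal O}_{C_i}(C_i)$, so $h^0({\cal O}_{C_i}(C_i)) = 0$ for every $i$.

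Suppose $g = g(C_i) \ge 1$. Then $\deg {\cal O}_{C_i}(C_i) = C_i^2 = 4g - 4 \ge 0$. If $g \ge 2$, this degree is at least $2g - 1$, and Riemann--Roch produces sections, a contradiction. If $g = 1$, then $C_i^2 = 0$ and $K_X \cdot C_i = 0$, and I would also use ${\cal O}_{C_i}(C_i) = {\cal O}_{C_i}(-K_X)$ up to $2$-torsion. This case is the main obstacle. I would try to exclude it with the relation $2K_{C_i} = (2K_X + 2C_i)|_{C_i} = (-C + 2C_i)|_{C_i}$, or else simply invoke the cited result that all components are rational. Once $g = 0$, the displayed formula gives $C_i^2 = -4$ and $K_X \cdot C_i = 2$.

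\textbf{Step iii).} Using $C = \sum_i C_i$ and disjointness,
$$4K_X^2 = C^2 = \sum_i C_i^2 = -4n,$$
so $K_X^2 = -n$.

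\textbf{Step iv).} Riemann--Roch for $D = -K_X$, with $\chi({\cal O}_X) = 1$, gives
$$\chi({\cal O}_X(-K_X)) = 1 + \tfrac12 (D^2 - K_X \cdot D) = 1 + K_X^2 = 1 - n.$$
We have $h^0({\cal O}_X(-K_X)) = 0$ because $|-K_X| = \emptyset$. By Serre duality, $h^2({\cal O}_X(-K_X)) = h^0({\cal O}_X(2K_X))$, and this vanishes because $X$ is rational, so all plurigenera are zero. Hence $h^1({\cal O}_X(-K_X)) = n - 1$.
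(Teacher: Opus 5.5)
\textbf{Gap in i).} Your steps iii) and iv) are correct, and so is the reduction of i)--ii) to the vanishing $h^0(\mathcal{O}_{C_i}(C_i))=0$ via the rigidity of $C$. However, the proposal does not actually prove i): you leave the case $g(C_i)=1$ open, and falling back on the cited rationality result amounts to assuming part i).

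The relation you wrote down closes this in one line, and it makes the case split unnecessary. Adjunction gives $\omega_{C_i}\cong\mathcal{O}_{C_i}(K_X+C_i)$, hence
\[
\omega_{C_i}^{\otimes 2}\cong\mathcal{O}_{C_i}(2K_X+2C_i)\cong\mathcal{O}_{C_i}(-C+2C_i)\cong\mathcal{O}_{C_i}(C_i),
\]
using $2K_X\sim -C$ and $\mathcal{O}_{C_i}(C)\cong\mathcal{O}_{C_i}(C_i)$ by disjointness. So $h^0(\omega_{C_i}^{\otimes 2})=0$. But $h^0(\omega_{C_i}^{\otimes 2})$ equals $1$ for $g=1$ and $3g-3$ for $g\ge 2$. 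This forces $g(C_i)=0$, and then $C_i^2=\deg\omega_{C_i}^{\otimes 2}=-4$ directly.

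Here is what you missed in the elliptic case. Adjunction gives $\mathcal{O}_{C_i}(C_i)\cong\mathcal{O}_{C_i}(-K_X)$ exactly, not just up to $2$-torsion. Disjointness gives $\mathcal{O}_{C_i}(C_i)\cong\mathcal{O}_{C_i}(-2K_X)$. Together they make $\mathcal{O}_{C_i}(C_i)$ trivial, so it has a nonzero section, a contradiction.

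\textbf{Comparison with the paper.} The paper avoids any genus analysis in i). From $0\to\mathcal{O}_X(-C)\to\mathcal{O}_X\to\bigoplus_i\mathcal{O}_{C_i}\to 0$ and $h^1(\mathcal{O}_X)=0$ it gets an injection
\[
H^1(\mathcal{O}_C)\hookrightarrow H^2(\mathcal{O}_X(2K_X))\cong H^0(\mathcal{O}_X(-K_X))^{\vee}=0 .
\]
So it uses the emptiness of $|-K_X|$, where you (after the fix) use $h^0(\mathcal{O}_X(-2K_X))=1$. Steps ii) and iii) are then the same computations as yours.

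For iv), the paper reads $h^1(\mathcal{O}_X(2K_X))=n-1$ off the same sequence by counting the connected components of $C$, then applies Serre duality, so it does not need iii). Your Riemann--Roch computation is equally short but relies on iii) together with $P_2(X)=0$. Both routes are valid.
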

{\it Proof:} $i)$ We use the short exact sequence \begin{eqnarray}\label{std} 0 \to {\cal O}_X (- C_1 - \cdots - C_n) \to {\cal O}_X \to \bigoplus_{i = 1}^n {\cal O}_{C_i} \to 0.\end{eqnarray} Since $X$ is rational, the long exact sequence gives: $$0 \to \bigoplus_{i = 1}^n H^1 ({\cal O}_{C_i}) \to H^2 ({\cal O}_X (- C_1 - \cdots - C_n)) \to \cdots$$ But by Serre's duality $$H^2 ({\cal O}_X (- C_1 - \cdots - C_n)) = H^2 ({\cal O}_X (2 K_X)) = H^0 ({\cal O}_X (- K_X)) = 0$$ which forces $$H^1 ({\cal O}_{C_i}) = 0.$$
$ii)$ By part $i)$ the $C_i$'s are smooth copies of $\mathbb{P}^1$, hence the adjunction formula gives: $$C_i^2 + C_i K_X = -2.$$ But from the other side, $$C_i^2 + C_i K_X = C_i^2 - \frac{1}{2} C_i (C_1 + \cdots + C_n) = \frac{1}{2} C_i^2,$$ hence we have the thesis.\\
$iii)$ We start from the equality $$- 2 K_X = C_1 + \cdots + C_n$$ and taking the squares of both sides, by part $ii)$, we have $$4 K_X^2 = -4 n.$$
$iv)$ The sequence~\eqref{std} induces a short exact sequence $$0 \to H^0 ({\cal O}_X) \to H^0 (\bigoplus_{i = 1}^n {\cal O}_{C_i}) \to H^1 ({\cal O}_X (- C_1 - \cdots - C_n)) \to 0.$$ This gives $$h^1 ({\cal O}_X (2 K_X)) = h^1 ({\cal O}_X (- C_1 - \cdots - C_n)) = n - 1,$$ and by Serre's duality, this is the same as $$h^1 ({\cal O}_X (- K_X)) = h^1 ({\cal O}_X (2 K_X)) = n - 1. \quad \square$$
\subsection{Involutions on $\mathbb{P}^1$}
In this subsection we will state some properties of the involutions on the projective line $\mathbb{P}^1$, since they will be crucial later.\\
Let us recall Pascal's theorem,~\cite{Shafarevich2013}. 
\begin{theorem}\label{pascal}
Let $p_1, \ldots, p_6$ be points of a smooth conic $ C \subset  \mathbb{P}^2$ and $\overline {p_ip_j}$ the line of $\mathbb P^2$ cutting on $C$ the divisor $p_i+p_j$, $1 \leq i,j \leq 6$.  Consider the lines $$ L_1 := \overline {p_1, p_2}, L_2 := \overline {p_3, p_4}, L_3 := \overline {p_5, p_6}, M_1 := \overline {p_4, p_5}, M_2 := \overline {p_6, p_1}, M_3 := \overline {p_2, p_3}, $$
then the three points $x_k := L_k \cap M_k$, for $k = 1, 2, 3$,  are collinear. 
\end{theorem}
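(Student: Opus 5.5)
Pascal's theorem is classical and is only quoted here, but I would prove it with the standard pencil-of-cubics argument (essentially the Cayley--Bacharach theorem for cubics). Set
$$\Gamma_1 := L_1 + L_2 + L_3, \qquad \Gamma_2 := M_1 + M_2 + M_3 ,$$
two plane cubics. Each contains the six points $p_1, \ldots, p_6$: every $p_i$ lies on exactly one $L_k$ and exactly one $M_k$.

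\emph{Step 1: intersection points.} First I count $\Gamma_1 \cap \Gamma_2$. It consists of the six points $p_i$ together with the three points $x_k = L_k \cap M_k$. The remaining intersections $L_k \cap M_j$ with $j \neq k$ are all among the $p_i$. For example $L_1 \cap M_2 = p_1$ and $L_1 \cap M_3 = p_2$, since two distinct lines meet in one point. This gives the nine points of Bezout when everything is in general position.

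\emph{Step 2: the auxiliary cubic.} Next I choose a point $q \in C$ different from the $p_i$. I pick $\lambda, \mu$, not both zero, with $\lambda \Gamma_1 + \mu \Gamma_2$ vanishing at $q$, and let $\Gamma$ be the resulting cubic. Then $\Gamma$ meets the conic $C$ in at least seven points, namely $p_1, \ldots, p_6, q$, which exceeds $2 \cdot 3 = 6$. By Bezout, $C$ is a component of $\Gamma$, so $\Gamma = C + N$ for some line $N$.

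\emph{Step 3: conclusion.} Every member of the pencil spanned by $\Gamma_1, \Gamma_2$ passes through the base points $x_1, x_2, x_3$. I would check that no $x_k$ lies on $C$. Indeed, if $x_k \in C \cap L_k$, then $x_k$ is one of the two points of $L_k$ on $C$. Also $x_k \in M_k$, and $L_k$ and $M_k$ share no $p_i$, so $x_k$ would be a third point of $C$ on one of these lines, which is impossible. Hence all three points $x_k$ lie on $N$ and are collinear.

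The main obstacle is degenerate configurations. These occur when some $p_i$ coincide, so that the lines are tangents, or when $L_k = M_k$, so that $x_k$ is undefined; the count in Step 1 then needs intersection multiplicities. I would either assume the $p_i$ are distinct with $L_k \neq M_k$, as implicit in the statement, or handle degenerations by a continuity argument: collinearity is a closed condition on the configuration, so it passes to limits of the generic case.
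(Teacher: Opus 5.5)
Your proof is correct. For distinct $p_i$ the six lines are pairwise distinct, since they cut six different pairs on $C$. The cubics $L_1+L_2+L_3$ and $M_1+M_2+M_3$ then meet exactly in $p_1,\dots,p_6,x_1,x_2,x_3$. The member of their pencil through a seventh point $q\in C$ contains $C$ by B\'ezout. Your check that no $x_k$ lies on $C$ then places $x_1,x_2,x_3$ on the residual line.

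The paper gives no proof of this statement. It quotes Pascal's theorem as classical and cites Shafarevich, so your argument simply supplies the standard B\'ezout/pencil-of-cubics proof that the citation stands in for.

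The paper only invokes the theorem, in Proposition~\ref{propuno}, for six distinct points, which it arranges by taking $p_1$ generic. So your generic case is exactly what is needed there. Your continuity argument for degenerate configurations also works, provided you note two things:
\begin{itemize}
\item The map $\{p_i,p_j\}\mapsto\overline{p_ip_j}$ extends continuously across the diagonal, becoming the tangent line there. It is the isomorphism $\mathrm{Sym}^2C\cong(\mathbb{P}^2)^{\vee}$.
\item Each $x_k$ is defined and varies continuously on the open set where $L_k\neq M_k$.
\end{itemize}
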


\begin{proposition}\label{propuno}
Let $\sigma_1, \sigma_2, \sigma_3 : \mathbb{P}^1 \to \mathbb{P}^1$ be non identical involutions and $F_i \in H^0 ({\cal O}_{\mathbb{P}^1} (2))$ a section vanishing on the fixed points of $\sigma_i$, $i = 1,2,3$. Assume $\sigma_3 \circ \sigma_2 \circ \sigma_1 \ne \mathbb{1}_{\mathbb{P}^1}$ and $(\sigma_3 \circ \sigma_2 \circ \sigma_1)^2 = \mathbb{1}_{\mathbb{P}^1}$ then $F_1, F_2, F_3$ are linearly dependent.
\end{proposition}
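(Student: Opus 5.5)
Embed $\mathbb{P}^1$ as a smooth conic $Q \subset \mathbb{P}^2 = \mathbb{P}(H^0({\cal O}_{\mathbb{P}^1}(2))^\vee)$. Under this identification each non identical involution $\sigma$ of $\mathbb{P}^1$ corresponds to a point $x \in \mathbb{P}^2 \setminus Q$: $\sigma(p)$ is the second intersection with $Q$ of the line $\overline{x p}$. The two fixed points of $\sigma$ are the tangency points of the two tangent lines from $x$, so they form the divisor cut on $Q$ by the polar line of $x$. Hence the section $F_i$ is, up to a scalar, the linear form defining the polar of $x_i$. Polarity with respect to $Q$ is a linear isomorphism between $\mathbb{P}^2$ and its dual. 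Therefore $F_1, F_2, F_3$ are linearly dependent if and only if the three polar lines are concurrent, that is, if and only if $x_1, x_2, x_3$ are collinear. The plan is thus to prove that the centers $x_1, x_2, x_3$ lie on a line, using Theorem~\ref{pascal}.

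Set $\tau := \sigma_3 \circ \sigma_2 \circ \sigma_1$. By hypothesis $\tau$ is a non identical involution of $\mathbb{P}^1$, so it has infinitely many two-element orbits. Choose a general point $p_1 \in Q$ and build the hexagon
$$p_2 = \sigma_1(p_1),\ p_3 = \sigma_2(p_2),\ p_4 = \sigma_3(p_3),\ p_5 = \sigma_1(p_4),\ p_6 = \sigma_2(p_5).$$
Then $p_4 = \tau(p_1)$, and $\sigma_3(p_6) = \tau(p_4) = \tau^2(p_1) = p_1$.

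The consecutive sides then pass through the centers as follows:
\begin{itemize}
\item $\overline{p_1p_2}$ and $\overline{p_4p_5}$ pass through $x_1$;
\item $\overline{p_2p_3}$ and $\overline{p_5p_6}$ pass through $x_2$;
\item $\overline{p_3p_4}$ and $\overline{p_6p_1}$ pass through $x_3$.
\end{itemize}
Opposite sides of the hexagon therefore meet exactly at $x_1, x_2, x_3$. Matching this to the labelling of Theorem~\ref{pascal}:
\begin{itemize}
\item $L_1 = \overline{p_1p_2}$ and $M_1 = \overline{p_4p_5}$ meet at $x_1$;
\item $L_2 = \overline{p_3p_4}$ and $M_2 = \overline{p_6p_1}$ meet at $x_3$;
\item $L_3 = \overline{p_5p_6}$ and $M_3 = \overline{p_2p_3}$ meet at $x_2$.
\end{itemize}
Pascal's theorem then gives that $x_1, x_2, x_3$ are collinear, and the dependence of the $F_i$ follows.

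The main obstacle is ensuring that the hexagon is genuine, so that Pascal applies and the intersection points are really the $x_k$. We need the six points distinct and each pair of opposite lines distinct. Each condition such as $p_i = p_j$ is a closed algebraic condition on $p_1$, so it suffices to show none holds identically. For instance:
\begin{itemize}
\item $p_4 = p_1$ for all $p_1$ would mean $\tau = \mathbb{1}$, which is excluded.
\item $p_2 = p_1$ identically is impossible since $\sigma_1 \neq \mathbb{1}$.
\item $p_5 = p_2$ would force $p_4 = p_1$.
\item $\overline{p_1p_2} = \overline{p_4p_5}$ as lines would force $\{p_4, p_5\} = \{p_1, p_2\}$, which again reduces to $\tau(p_1) = p_1$ or to $\sigma_1\tau$ having a fixed point. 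These finitely many points are avoided by a general choice.
\end{itemize}
Coincidences among the $\sigma_i$ (for example $\sigma_1 = \sigma_2$) must be checked as well. If $x_1 = x_2$, collinearity is trivial, so such cases can be dismissed directly. Once this genericity check is done, the polarity argument of the first paragraph completes the proof.
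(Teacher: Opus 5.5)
Your proposal is correct and follows the paper's proof essentially step by step. You identify each $F_i$ with the polar of the center $x_i$ of $\sigma_i$, reduce the claim to the collinearity of $x_1,x_2,x_3$, and apply Pascal's theorem to the hexagon $p_1,\dots,p_6$ built by applying $\sigma_1,\sigma_2,\sigma_3$ in turn. Your genericity check is in fact more careful than the paper's: when two of the $\sigma_i$ coincide (for example $\sigma_1=\sigma_2$, which forces $p_3=p_1$), the hexagon degenerates for every choice of $p_1$. The paper overlooks this case, and you correctly dismiss it because the corresponding centers coincide, so collinearity is trivial.
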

{Proof:} Let embed $\nu: \mathbb{P}^1 \to \mathbb{P}^2$ as a smooth conic $C \subset \mathbb{P}^2$ via the complete linear system $|{\cal O}_{\mathbb{P}^1} (2)|$. The three involutions $\sigma_i: \mathbb{P}^1 \to \mathbb{P}^1$ extend then to three involutions $\Sigma_i : \mathbb{P}^2 \to \mathbb{P}^2$. Each $\Sigma_i$ has an isolated fixed point $x_i \in \mathbb{P}^2$ and a fixed line $L_i \subset \mathbb{P}^2$, which is the polar line of $C$ with center $x_i$. The three polynomials $F_i$ are the pull - backs $\nu^* (L_i)$, so we need to show that $L_1, L_2, L_3 \in H^0 ({\cal O}_{\mathbb{P}^2} (1))$ are linearly dependent. Let $[X_1, X_2, X_3]$ be projective coordinates on $\mathbb{P}^2$, and let $G \in H^0 ({\cal O}_{\mathbb{P}^2} (2))$ be the equation of the conic $C$ in these coordinates. Let $x_i = [a_{i, 1}, a_{i, 2}, a_{i, 3}]$ be the projective coordinates of each $x_i$. The linear equation of each $L_i$ is $$L_i = \sum_{j = 1}^3 a_{i, j} \frac{\partial G}{\partial X_j},$$ so we need to prove that $${\rm det\,} ((a_{i, j})_{i, j = 1}^3) = 0,$$ that is, the $x_i$'s are collinear. This follows from Pascal's Theorem~\ref{pascal}, choosing a point $p_1 \in C$, and defining $p_2 := \sigma_1 (p_1), p_3 := \sigma_2 (p_2), p_4 := \sigma_3 (p_3), p_5 := \sigma_1 (p_4), p_6 := \sigma_2 (p_5)$, and noting that $p_1 = \sigma_3 (p_6)$.\\
The hypothesis that $\sigma_3 \circ \sigma_2 \circ \sigma_1 \ne \mathbb{P}^1$ ensures that, for a generic choice of $p_1$, these are six distinct points. With the notations of Pascal's Theorem~\ref{pascal}, we have: $$x_1 = \overline {p_1, p_2} \cap \overline {p_4, p_5},$$ $$x_2 = \overline {p_2, p_3} \cap \overline {p_5, p_6},$$ $$x_3 = \overline {p_3, p_4} \cap \overline {p_6, p_1},$$ hence we find the thesis. $\square$\\\\
\begin{remark}
The hypothesis that $\sigma_3 \circ \sigma_2 \circ \sigma_1 \ne \mathbb{1}_{\mathbb{P}^1}$ cannot be removed. Otherwise, we find $$p_1 = p_4, p_2 = p_5, p_3 = p_6,$$ and thus the lines $$L_1 = M_1, \quad L_2 = M_2, \quad L_3 = M_3$$ coincide. In this case it does not make any sense to find the intersection points $L_i \cap M_i$.\\
An example of this behaviour is the following: if $t$ is an affine coordinate on $\mathbb{A}^1 \subset \mathbb{P}^1$, we take the involutions $$\sigma_1 (t) := -t,\quad \sigma_2 (t) := \frac{1}{t},\quad \sigma_3 (t) := - \frac{1}{t}.$$ Of course $\sigma_3 \circ \sigma_2 \circ \sigma_1 = \mathbb{1}_{\mathbb{P}^1}$.\\
The pair of fixed points of $\sigma_1$ is $0, \infty$, thus $$F_1 (U, V) = U V$$ in the projective coordinates $[U, V]$ over $\mathbb{P}^1$. Similarly, the pair of fixed points of $\sigma_2$ is $\pm 1$, thus $$F_2 (U, V) = U^2 - V^2.$$ Finally, the fixed points of $\sigma_3$ are $\pm i$, thus $$F_3 (U, V) = U^2 + V^2,$$ which is of course linearly independent from $F_1, F_2$.
\end{remark}
The following Proposition is necessary to deal with the case of three involutions satisfying $\sigma_3 \circ \sigma_2 \circ \sigma_1 = \mathbb{1}_{\mathbb{P}^1}$.
\begin{proposition}\label{casodue}
Let $A_1, A_2, A_3 \in H^0({\cal O}_{\mathbb{P}^1} (2))$ be three linearly polynomials of degree $2$ over $\mathbb{P}^1$, with corresponding divisors $D_1, D_2, D_3$. Let $$\sigma_1 : \mathbb{P}^1 \to \mathbb{P}^1$$ be the involution of the $g_2^1$ generated by $D_2, D_3$. Similarly, take $$\sigma_2 : \mathbb{P}^1 \to \mathbb{P}^1$$ the involution of the $g_2^1$ generated by $D_1, D_3$, and $$\sigma_3 : \mathbb{P}^1 \to \mathbb{P}^1$$ the involution of the $g_2^1$ generated by $D_1, D_2$.\\ Assume that $$\sigma_3 \circ \sigma_2 \circ \sigma_1 = \mathbb{1}_{\mathbb{P}^1}.$$ Then each $D_i$ coincides with the pair of fixed points of $\sigma_i$.
\end{proposition}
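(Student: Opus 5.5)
The plan is to translate the statement into plane projective geometry via the conic model already used in the proof of Proposition~\ref{propuno}. Embed $\nu:\mathbb{P}^1\to\mathbb{P}^2$ as a smooth conic $C$ by $|{\cal O}_{\mathbb{P}^1}(2)|$. Then each $A_i$ corresponds to a line $\ell_i\subset\mathbb{P}^2$ with $\nu^*\ell_i=D_i$. Since $A_1,A_2,A_3$ are linearly independent, the lines $\ell_1,\ell_2,\ell_3$ are not concurrent, so they form a genuine triangle. Its vertices are
$$x_1:=\ell_2\cap\ell_3,\quad x_2:=\ell_1\cap\ell_3,\quad x_3:=\ell_1\cap\ell_2 .$$

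The pencil spanned by $D_2,D_3$ is cut on $C$ by the lines through $x_1$. Hence $\sigma_1$ is the restriction to $C$ of the harmonic homology $\Sigma_1$ with center $x_1$ and axis the polar line $p_1$ of $x_1$ with respect to $C$. The fixed pair of $\sigma_1$ is therefore $p_1\cap C$, and the same holds for $\sigma_2,\sigma_3$. Here I implicitly need $x_i\notin C$, i.e.\ that the $g^1_2$ has no base point, so that $\sigma_i$ is a genuine involution. If, say, $D_2$ and $D_3$ shared a point, the pencil would have a base point and define no involution; the statement tacitly excludes this, and I would say so. The claim "$D_i$ equals the fixed pair of $\sigma_i$" then becomes $p_i=\ell_i$. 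In other words, I must show that the triangle $x_1x_2x_3$ is self-polar with respect to $C$.

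Next I extract commutation relations from the hypothesis. From $\sigma_3\sigma_2\sigma_1=\mathbb{1}$ we get $\sigma_3=\sigma_1\sigma_2$. Since $\sigma_3$ is an involution, $(\sigma_1\sigma_2)^2=\mathbb{1}$, so $\sigma_1$ and $\sigma_2$ commute. Then $\sigma_3\sigma_1=\sigma_1\sigma_2\sigma_1=\sigma_2$, which is again an involution, so $\sigma_3$ commutes with $\sigma_1$; likewise $\sigma_3$ commutes with $\sigma_2$. The three involutions are pairwise distinct: if $\sigma_1=\sigma_2$ then $\sigma_3=\mathbb{1}$, which is impossible, and similarly for the other pairs.

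The key lemma, which I expect to be the main (though elementary) obstacle, is this: two distinct commuting involutions of $\mathbb{P}^1$ have harmonically separated fixed pairs. Equivalently, the center of each lies on the polar of the other. To prove it, choose a coordinate with $\sigma_1(t)=-t$, whose fixed points are $0,\infty$. Then $\sigma_2$ preserves $\{0,\infty\}$. If it fixes both points, then $\sigma_2(t)=ct$ with $c=-1$, so $\sigma_2=\sigma_1$, which is excluded. Hence $\sigma_2$ swaps $0$ and $\infty$, so $\sigma_2(t)=a/t$ with fixed points $\pm\sqrt a$. These are harmonic with $\{0,\infty\}$. In the conic picture, this says the pole $x_2$ lies on $p_1$.

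Applying the lemma to all three pairs gives that $x_2,x_3\in p_1$, $x_1,x_3\in p_2$ and $x_1,x_2\in p_3$. Since $x_2\neq x_3$ (the triangle is non-degenerate), $p_1$ is the line $\overline{x_2x_3}=\ell_1$, and similarly $p_2=\ell_2$ and $p_3=\ell_3$. Pulling back by $\nu$ shows that $D_i=\nu^*\ell_i=\nu^*p_i$ is exactly the pair of fixed points of $\sigma_i$, which is the statement.
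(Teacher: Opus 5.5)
Your proof is correct, but it takes a different route from the paper's. The paper stays on $\mathbb{P}^1$ and uses the hypothesis only in the form $\sigma_1=\sigma_3\circ\sigma_2$ (and its analogues $\sigma_2=\sigma_3\circ\sigma_1$, $\sigma_3=\sigma_2\circ\sigma_1$). Since $D_1$ belongs to both pencils $\langle D_1,D_3\rangle$ and $\langle D_1,D_2\rangle$, it is preserved by $\sigma_2$ and $\sigma_3$, hence by $\sigma_1$. A $\sigma_1$-invariant degree-$2$ divisor is either a fibre of the $g^1_2$ defining $\sigma_1$ or its fixed pair, and the first option would put $A_1$ in the span of $A_2,A_3$. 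In your conic picture, the paper's argument says that $\ell_1$, which passes through $x_2$ and $x_3$, is $\Sigma_1$-stable and avoids $x_1$, so it must be the axis $p_1$. You reach the same equality from the other side: pairwise commutation plus the harmonic-pairs lemma show that $p_1$ contains $x_2$ and $x_3$. The paper's argument is shorter and needs neither the commutation relations nor the lemma on commuting involutions. Yours costs a little more but extracts more structure: $\{\mathrm{id},\sigma_1,\sigma_2,\sigma_3\}$ is a Klein four-group, and the triangle $\ell_1\ell_2\ell_3$ is self-polar with respect to $C$. That is exactly the form in which the proposition is used in Lemma~\ref{codtre}, where the polar of each vertex of the coordinate triangle is the opposite side, forcing $\hat q$ to be diagonal. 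You also make explicit the tacit hypothesis that $D_j$ and $D_k$ share no point (i.e.\ $x_i\notin C$), which the paper leaves implicit and which holds in its application because the curves $E_i$ are pairwise disjoint.
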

{\it Proof:} We have $$\sigma_1 = \sigma_3 \circ \sigma_2.$$ The divisor $D_1$ is preserved by both $\sigma_2, \sigma_3$ by construction. Thus also $\sigma_1$ preserves $D_1$. This implies that either $D_1$ is the pair of fixed points of $\sigma_1$, or it belongs to the $g_2^1$ generated by $D_2, D_3$. The second case is impossible, since $A_1, A_2, A_3 \in H^0 ({\cal O}_{\mathbb{P}^1} (2))$ are linearly independent. Hence we have the thesis.\\
For the other two pairs, we argue similarly on the cyclic permutations $\sigma_2 = \sigma_3 \circ \sigma_1$ and $\sigma_3 = \sigma_2 \circ \sigma_1$. $\square$
\begin{proposition}\label{propdue}
Let $A_1, A_2 \in H^0 ({\cal O}_{\mathbb{P}^1} (2))$ be two polynomials of degree $2$ without common factors, and let $\sigma: \mathbb{P}^1 \to \mathbb{P}^1$ be the involution associated to the $g_2^1$ generated by $A_1, A_2$. Then the pair of fixed points of $\sigma$ is cut by the Jacobian polynomial $J (A_1, A_2) = (\partial_U A_1) (\partial_V A_2) - (\partial_V A_1) (\partial_U A_2)$.
\end{proposition}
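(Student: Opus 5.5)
The plan is to reduce the statement to a computation on the ramification points of the degree two map $\phi = [A_1 : A_2] : \mathbb{P}^1 \to \mathbb{P}^1$.

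Since $A_1, A_2$ have no common factor, the pencil they span is base point free. Hence $\phi$ is a morphism of degree $2$. Its fibres are exactly the divisors of the $g_2^1$, namely the zero divisors of $\lambda A_1 + \mu A_2$. The involution $\sigma$ is the deck transformation of $\phi$. Therefore its fixed points are precisely the ramification points of $\phi$, and by Riemann--Hurwitz there are exactly two of them, counted with multiplicity. It then suffices to show that a point $t = [U_0 : V_0]$ is a ramification point of $\phi$ if and only if $J(A_1, A_2)(U_0, V_0) = 0$. Note that $J$ is bihomogeneous of degree $(2-1) + (2-1) = 2$, so it is a section of ${\cal O}_{\mathbb{P}^1}(2)$.

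The main step is this characterization of ramification. The point $t$ is a ramification point iff it is a double root of the member $A := \mu A_1 - \lambda A_2$ of the pencil vanishing at $t$, where $[\lambda : \mu] = \phi(t)$. A root of a binary form is double iff both partials vanish there. Since $A(t) = 0$, Euler's relation $U \partial_U A + V \partial_V A = 2A$ shows that it is enough for one partial to vanish at $t$ together with the gradient being proportional to something forced. Concretely, I would argue as follows. The vector $(\mu, -\lambda)$ is nonzero and lies in the kernel of the $2 \times 2$ matrix with rows $(A_1(t), A_2(t))$, $(\partial_U A_1(t), \partial_U A_2(t))$, $(\partial_V A_1(t), \partial_V A_2(t))$ exactly when $t$ is a double root. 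Using Euler's relation, the first row is a combination of the other two, so the condition is that the last two rows are linearly dependent. This is precisely $J(A_1, A_2)(t) = 0$.

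Conversely, suppose $J(t) = 0$. Then some nonzero $(\mu, -\lambda)$ kills both derivative rows, and by Euler it also kills the value row. This gives a member of the pencil with a double root at $t$; that member is nonzero because $A_1, A_2$ are independent, since they have no common factor. Finally, I need $J \not\equiv 0$, so that its zero divisor is an honest pair of points. If $J \equiv 0$, every point would be a ramification point, contradicting $\deg \phi = 2$. Since $J$ has degree $2$ and vanishes on the two ramification points, with multiplicities matching those of ramification, its divisor equals the fixed-point pair of $\sigma$. The multiplicity check, in the case where the two fixed points coincide, is the only delicate spot. However, it cannot happen in characteristic $0$ by Riemann--Hurwitz, since the map has two distinct simple ramification points. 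So the identification is exact.
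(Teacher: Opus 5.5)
Your proof is correct and takes the same route as the paper: you identify the fixed points of $\sigma$ with the ramification points of the double cover $[A_1 : A_2]$, and you detect ramification by the vanishing of the Jacobian determinant. Your version is more complete than the paper's one-line argument, since the Euler-relation step gives both inclusions and you also check $J \not\equiv 0$, whereas the paper only notes that $J$ vanishes on the ramification locus. One small slip: the matrix you describe has three rows, so it is $3 \times 2$, not $2 \times 2$.
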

{\it Proof:} The quotient of $\mathbb{P}^1$ modulo $\sigma$ is the double cover $$\pi : [A_1, A_2]: \mathbb{P}^1 \to \mathbb{P}^1,$$ which ramifies at the fixed locus of $\sigma$. But on the ramification locus $\pi$ fails to be a local isomorphism, thus the Jacobian determinant $${\rm det\,} \left(\begin{array}{lr}
\displaystyle \frac{\partial A_1}{\partial U} & \displaystyle \frac{\partial A_1}{\partial V}\\\\
\displaystyle \frac{\partial A_2}{\partial U} & \displaystyle \frac{\partial A_2}{\partial V}
\end{array}
\right) = 0\,.\qquad \begin{array}{lr}
\\\\
\\\\
\end{array}
$$
must vanish. $\square$
\\

\newpage
\section{A projective model of a Coble surface}
The goal of this section is to find a sufficient condition for a Coble surface $X$ with irreducible boundary curve $C$ to contain $(-2)$-curves. We begin by  considering a set $E_1, E_2, E_3 \subset X$ of disjoint $(-1)$-curves, since we are interested to describe the linear system $H := C + E_1 + E_2 + E_3$.\\
The next lemma describes the most important properties of the linear system $|H|$ in view of our applications. 
\begin{lemma}\label{propzero}
Let $X$ be a Coble surface, with irreducible boundary curve $C \in |-~2~K_X|$, and let $E_1, E_2, E_3$ three disjoint $(-1)$-curves in $X$. Then:\\
i) The linear system $$H := C + E_1 + E_2 + E_3$$ has no base points or base components, and it has dimension $h^0 ({\cal O}_X (H)) = 4$. Hence $|H|$ induces a regular morphism $$f : X \to \mathbb{P}^3,$$ which is birational onto a quintic surface.\\
ii) The curve $C$ is embedded by $f$ as a smooth plane conic. The images of the $E_i$'s by $f$ are three pairwise distinct lines lying in the plane of $f(C)$.\\
iii) The three divisors ${\cal E}_i := E_i - K_X$ are effective and rigid, and their images are three distinct double lines of $f(X)$. Moreover, ${\cal E}_i^2 = 0$.\\
iv) The divisors $E_i + {\cal E}_i$ are still rigid.\\
v) If an irreducible curve $R$ is contracted by $f$, then $R$ is a smooth rational $(-2)$-curve.\\
vi) If the three restrictions $(E_i)|_C$ belong to the same $g_2^1$ on $C \simeq \mathbb{P}^1$, then $X$ contains $(-2)$-curves.
\end{lemma}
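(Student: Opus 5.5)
The plan is to do the numerical bookkeeping first and then prove each item with short restriction sequences. Since $C$ is irreducible, Proposition~\ref{basic} gives $K_X^2=-1$, $C^2=-4$, $C\cdot K_X=2$ and $C\simeq\mathbb{P}^1$. Also $C\cdot E_i=-2K_X\cdot E_i=2$. From these,
$$H^2=-4+12-3=5,\qquad H\cdot K_X=-1,\qquad H\cdot C=2,\qquad H\cdot E_i=1 .$$
Riemann--Roch then gives $\chi(\mathcal O_X(H))=4$. Two identities in $Pic(X)$ will be used repeatedly:
$$H=\mathcal E_i+\mathcal E_j+E_k \ \text{ for } \{i,j,k\}=\{1,2,3\},\qquad H-E_k=C+E_i+E_j .$$

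For i) and ii), I would use
$$0\to\mathcal O_X(E_1+E_2+E_3)\to\mathcal O_X(H)\to\mathcal O_C(H)\simeq\mathcal O_{\mathbb{P}^1}(2)\to 0 .$$
Since the $E_i$ are disjoint $(-1)$-curves, restricting successively to them (each restriction is $\mathcal O_{\mathbb{P}^1}(-1)$) gives $h^0(\sum E_i)=1$ and $h^1(\sum E_i)=0$. Hence $h^0(H)=4$, and $H^0(\mathcal O_X(H))$ surjects onto the complete $|\mathcal O_{\mathbb{P}^1}(2)|$ on $C$. Therefore $|H|$ has no base points on $C$ and embeds $C$ as a smooth conic.

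Any base point must lie on the member $C+\sum E_i$, so it remains to treat points of $E_k$. The same kind of sequence, applied to $C+E_i+E_j$ (whose restriction to $C$ has degree $0$), gives $h^1(H-E_k)=0$. So $H^0(\mathcal O_X(H))\to H^0(\mathcal O_{E_k}(1))$ is onto, and $|H|$ is base point free.

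Since $H^2=5$ is prime and the image is a nondegenerate surface in $\mathbb{P}^3$, $f$ is birational onto a quintic. The lines $f(E_i)$ have degree $H\cdot E_i=1$. They are distinct because they cut the distinct divisors $e_i=E_i\cdot C$ on the conic. They lie in the plane $\Pi$ of $f(C)$ because $C+\sum E_i\in|H|$.

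For iii) and iv), a direct computation gives $\mathcal E_i^2=0$. Consider
$$0\to\mathcal O_X(-K_X)\to\mathcal O_X(E_i-K_X)\to\mathcal O_{E_i}\to 0 .$$
Here $h^0(-K_X)=0$ by definition, and $h^1(-K_X)=h^1(2K_X)=h^0(\mathcal O_C)-1=0$ by part iv) of Proposition~\ref{basic}. (Only the integrality of $C$ is needed for this.) Hence $h^0(\mathcal E_i)=1$. Twisting once more by $E_i$, the quotient becomes $\mathcal O_{E_i}(2E_i-K_X)=\mathcal O_{\mathbb{P}^1}(-1)$, which gives $h^0(E_i+\mathcal E_i)=1$.

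To identify the image of $\mathcal E_i$, I would use the two hyperplane sections $\mathcal E_i+\mathcal E_j+E_k$ and $\mathcal E_i+\mathcal E_k+E_j$. They lie in two distinct planes, through $f(E_k)$ and $f(E_j)$ respectively, and neither plane is $\Pi$. So $f(\mathcal E_i)$ lies on their intersection line. Since $H\cdot\mathcal E_i=2$, the image is a double line. These three lines are distinct, since they meet $\Pi$ at the three distinct points $q_i=f(E_j)\cap f(E_k)$.

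For v), an irreducible contracted curve $R$ satisfies $H\cdot R=0$, so $R\neq C,E_i$. Then $R\cdot C\ge0$, $R\cdot E_i\ge0$ and their sum is $0$, which forces $R\cdot C=0$ and hence $K_X\cdot R=0$. The Hodge index theorem with $H^2>0$ gives $R^2<0$, and adjunction then forces $R^2=-2$, $p_a(R)=0$.

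Item vi) is the main obstacle. The three divisors $e_i$ lie in one $g^1_2$ exactly when the three lines $f(E_i)\subset\Pi$ are concurrent at a point $P\notin f(C)$. In that case $q_1=q_2=q_3=P$, so all three double lines $f(\mathcal E_i)$ and all three lines $f(E_i)$ pass through $P$.

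My plan is to show that $f^{-1}(P)$ contains a curve, and then apply v). I would try to do this by comparing multiplicities. The plane $\Pi$ cuts $f(X)$ at $P$ in three concurrent lines (the conic misses $P$), so a general plane through $P$ meets $f(X)$ with multiplicity at least $3$ there. On the other hand, the three double lines through $P$ force extra local branches. If $f^{-1}(P)$ were finite, I would compute the degree of the projection from $P$ via the net $|H-P|$. The base points of that net lie over $P$: at least one on each $E_i$, plus contributions from the $\mathcal E_i$. I would try to show that this count forces the degree to be $\le 0$, which is a contradiction.

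If this count does not close, the fallback is to produce an explicit class $R$ with $R\cdot H=R\cdot K_X=0$ and $R^2=-2$ (a combination of $\mathcal E_i$, $E_i$ and $K_X$), and prove $h^0(R)>0$. For that I would use the linear relation among the sections $v_i$ to glue a section over $C+\sum E_i$, and then lift it via the sequences above.
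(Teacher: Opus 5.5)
Your arguments for i), ii), iv) and v) are correct and follow the paper. Your birationality argument via nondegeneracy and your proof of v) through $R\cdot C=R\cdot E_i=0$ are even slightly cleaner. Your reduction of vi) to the case where all $f(E_i)$ and all $f(\mathcal E_i)$ pass through one point $P$ also matches the paper. There is, however, a small gap in iii) and a real one in vi).

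\textbf{The gap in iii).} You deduce that the three double lines are distinct from the distinctness of the points $q_i=f(E_j)\cap f(E_k)$. That argument says nothing when the lines $f(E_i)$ are concurrent. Part iii) does not exclude this case, and it is exactly the situation of vi). The paper argues instead that $|H-\mathcal E_1-\mathcal E_2|=|E_3|=\{E_3\}$. So only one plane contains $f(\mathcal E_1)\cup f(\mathcal E_2)$, whereas a single line lies on a pencil of planes.

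\textbf{The gap in vi): the main plan targets something false.} You try to find a curve in $f^{-1}(P)$, but in general there is none, so the multiplicity count cannot close. Since $\mathcal E_i\cdot E_j=1$ and $f(\mathcal E_i)$ meets $\Pi$ only at $P$, each $\mathcal E_i$ passes through the points $x_j\in E_j$ and $x_k\in E_k$ lying over $P$. Hence three branches at $x_1,x_2,x_3$ already account for all six lines: $f(\mathcal E_i)$ is the intersection of the branches at $x_j$ and $x_k$. So $P$ is a triple point, and projection from $P$ has degree $5-3=2>0$, with no contradiction.

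Concretely, take the blow-up model with $E_1,E_2,E_3$ over $p_1,p_2,p_3$. A contracted curve is disjoint from the $E_i$, hence is a $(-2)$-curve of $Z=Bl_{p_4,\dots,p_{10}}\mathbb{P}^2$. Concurrency only makes $p_1,p_2,p_3$ Geiser-fixed points of $Z$. For example, let $Z$ be a double plane branched along a smooth quartic, and let the boundary come from the pullback of a conic tritangent to that quartic. Then $Z$ has no $(-2)$-curves, so $f$ contracts nothing.

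\textbf{The fallback also fails.} Write $R=aK_X+\sum b_iE_i$; this is the whole span of $K_X$, the $E_i$ and the $\mathcal E_i$. The conditions $R\cdot H=R\cdot K_X=0$ force $a=0$ and $\sum b_i=0$. Then $R^2=-2$ leaves only $R=\pm(E_i-E_j)$, which is never effective.

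\textbf{The missing idea.} The $(-2)$-curves are not contracted: they have $H$-degree $2$. Since $f(E_1)$ and the double line $f(\mathcal E_1)$ both pass through $P$, they span a plane $\Pi'$. Its pullback is $E_1+\mathcal E_1+R$ with $R\in|H-E_1-\mathcal E_1|=|-K_X-E_1+E_2+E_3|$, so $R^2=-2$, $K_X\cdot R=0$ and $H\cdot R=2$.

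The image $f(R)\subset\Pi'$ cannot be a double line: the planes through it would give a pencil in $|E_1+\mathcal E_1|$, contradicting iv). So one of three things happens. Either a component of $R$ is contracted, and v) applies. Or $R$ maps isomorphically onto a conic and is itself a $(-2)$-curve. Or $R=A+B$ with rigid rational components mapping onto lines, and adjunction as in v) gives $A^2=B^2=-2$. In the blow-up model, $R$ is the strict transform of the cubic through $p_4,\dots,p_{10}$ that is singular at $p_1$.
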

{\it Proof:} i) For any sequence of disjoint $(-1)$-curves $E_1, \ldots, E_r$ in $X$, the short exact sequence: $$0 \to {\cal O}_X (E_1 + \cdots + E_{r - 1}) \to {\cal O}_X (E_1 + \cdots + E_r) \to {\cal O}_{E_r} (E_1 + \cdots E_r) \to 0$$ induces isomorphisms $$H^0 ({\cal O}_X (E_1 + \cdots + E_{r - 1})) \simeq H^0 ({\cal O}_X (E_1 + \cdots + E_r))$$ and $$H^1 ({\cal O}_X (E_1 + \cdots + E_{r - 1})) \simeq H^1 ({\cal O}_X (E_1 + \cdots + E_r)).$$ Hence, by induction on $r$ we find: \begin{eqnarray}\label{hzero} h^0 ({\cal O}_X (E_1 + \cdots + E_r)) = h^0 ({\cal O}_X) = 1\end{eqnarray} and \begin{eqnarray}\label{hu} h^1 ({\cal O}_X (E_1 + \cdots + E_r)) = h^1 ({\cal O}_X) = 0.\end{eqnarray} Now we look at the short exact sequence: \begin{eqnarray}\label{inizio} 0 \to {\cal O}_X (- C) \to {\cal O}_X \to {\cal O}_C \to 0.\end{eqnarray} Taking the tensor with ${\cal O}_X (H)$ we find: $$0 \to {\cal O}_X (E_1 + E_2 + E_3) \to {\cal O}_X (H) \to {\cal O}_C (H) \to 0.$$ Due to equality~\eqref{hu}, it remains exact on global sections: $$0 \to H^0 ({\cal O}_X (E_1 + E_2 + E_3)) \to H^0 ({\cal O}_X (H)) \to H^0 ({\cal O}_C (H)) \to 0.$$ Note that, by Proposition~\ref{basic}: $${\cal O}_C (H) \simeq {\cal O}_{\mathbb{P}^1} (C H) = {\cal O}_{\mathbb{P}^1} (2).$$ Together with equality~\eqref{hzero}, this shows that $h^0 ({\cal O}_X (H)) = 4$ and that $C$ is not a base component of $H$, since $h^0 ({\cal O}_X (H - C)) = h^0 ({\cal O}_X (E_1 + E_2 + E_3)) = 1 < 4$. Moreover, the surjectivity of $H^0 ({\cal O}_X (H)) \to H^0 ({\cal O}_C (H))$ proves that $H$ has not base points on the curve $C$.\\
Now we tensor the short exact sequence in~\eqref{inizio} with ${\cal O}_X (C + E_1 + E_2)$, and applying equalities~\eqref{hzero} and~\eqref{hu} we find \begin{eqnarray}\label{newhzero} h^0 ({\cal O}_X (C + E_1 + E_2)) = 2\end{eqnarray} and \begin{eqnarray}\label{newhu} h^1 ({\cal O}_X (C + E_1 + E_2)) = 0.\end{eqnarray} Finally, we consider $$0 \to {\cal O}_X (C + E_1 + E_2) \to {\cal O}_X (H) \to {\cal O}_{E_3} (H) \to 0.$$ Equality~\eqref{newhu} states that the induced sequence on global sections $$0 \to H^0({\cal O}_X (C + E_1 + E_2)) \to H^0 ({\cal O}_X (H)) \to H^0 ({\cal O}_{E_3} (H)) \to 0$$ is still exact. The surjectivity of $H^0 ({\cal O}_X (H)) \to H^0 ({\cal O}_{E_3} (H)) = H^0 ({\cal O}_{\mathbb{P}^1} (1))$ shows that $H$ has no base points on $E_3$, and by equality~\eqref{newhzero} we find: $$h^0 ({\cal O}_X (H - E_3)) = h^0 ({\cal O}_X (C + E_1 + E_2)) = 2 < 4.$$ This means that $E_3$ is not a base component of $|H|$. By a symmetric argument, the same is true for $E_1, E_2$.\\ Since all the base locus of $|H|$ must be contained in the curve $C + E_1 + E_2 + E_3$, this proves that $|H|$ has no base points or base components at all.\\
Thus $|H|$ induces a regular map $f : X \to \mathbb{P}^3$, and the self - intersection $$H^2 = 5$$ implies that $f$ is birational on a quintic surface.\\
ii) The equality  $$h^0 ({\cal O}_X (H - C)) = 1$$ shows that the image of $C$ spans a hyperplane. Together with the equality $$H C = 2,$$ this means that $f : C \to f(C)$ is an isomorphism on a smooth plane conic.\\
The equality $$H E_i = 1$$ proves that each $E_i$ is sent isomorphically on a line in $\mathbb{P}^3$.\\
The equality $H = C + E_1 + E_2 + E_3$ implies that there exists a plane in $\mathbb{P}^3$ containing all the images of $C, E_1, E_2, E_3$.\\
 Finally, the long exact sequence induced by $$0 \to {\cal O}_X (E_3) \to {\cal O}_X (C + E_3) \to {\cal O}_C (C + E_3) \to 0$$ proves that $h^0 ({\cal O}_X (C + E_3)) = 1$. But this is the same to say $h^0 ({\cal O}_X (H - E_1 - E_2)) = 1$, so there exists a unique plane containing $f (E_1) + f(E_2)$, which means that $f(E_1) \ne f (E_2)$.\\\\
iii) Let $${\cal E}_i := E_i - K_X.$$ By construction, we have \begin{eqnarray}\label{dec} H = - 2 K_X + E_1 + E_2 + E_3 = {\cal E}_i + {\cal E}_j + E_k\end{eqnarray} for all permutations $(i, j, k)$ of indices $(1, 2, 3)$.\
By Proposition~\ref{basic} we know that $K_X^2 = -1$, so we find the intersection products: \begin{eqnarray}\label{sdlines} {\cal E}_i {\cal E}_j = {\cal E}_i E_j = 1 - \delta_{i, j}.\end{eqnarray} Still by Proposition~\ref{basic}, the short exact sequence $$0 \to {\cal O}_X (- K_X) \to {\cal O}_X ({\cal E}_i) \to {\cal O}_{E_i} ({\cal E}_i) \to 0,$$ induces isomorphisms \begin{eqnarray}\label{hzeroellp} H^0 ({\cal O}_X ({\cal E}_i)) \simeq H^0 ({\cal O}_{E_i} ({\cal E}_i)) \simeq H^0 ({\cal O}_{\mathbb{P}^1}) = \complex\end{eqnarray} and \begin{eqnarray}\label{hunoellp} H^1 ({\cal O}_X ({\cal E}_i)) = 0.\end{eqnarray} This means that each ${\cal E}_i$ is effective and rigid. We want to compute $H^0 ({\cal O}_X (H - {\cal E}_1))$ to calculate how many hyperplanes in $\mathbb{P}^3$ contain the image of ${\cal E}_1$. The decomposition~\eqref{dec} gives a the short exact sequence $$0 \to {\cal O}_X ({\cal E}_3) \to {\cal O}_X (H - {\cal E}_1) \to {\cal O}_{E_2} (H - {\cal E}_1) \simeq {\cal O}_{\mathbb{P}^1} \to 0,$$ where the last isomorphism comes from part $ii)$ and equality~\eqref{sdlines}. Applying equalities~\eqref{hzeroellp} and~\eqref{hunoellp}, we find $$h^0 ({\cal O}_X (H - {\cal E}_1)) = 2,$$ so there exists a pencil of hyperplanes containing $f ({\cal E}_1)$. This means that $f ({\cal E}_1)$ is a line, and its multiplicity for $f(X)$ is easily computed as $$H {\cal E}_1 = (- 2 K_X + E_1 + E_2 + E_3) (E_1 - K_X) = 2.$$ The same is true of course for ${\cal E}_2, {\cal E}_3$.
We prove now that these are distinct lines: indeed, the decomposition~\eqref{dec} gives: $$|H - {\cal E}_1 - {\cal E}_2| = |E_3| = \{E_3\},$$ so only one hyperplane contains $f ({\cal E}_1) + f ({\cal E}_2)$.\\
%Finally, by Proposition~\ref{basic} the self - intersection ${\cal E}_i^2$ equals: $${\cal E}_i^2 = (E_i - K_X)^2 = -1 - 1 + 2 = 0.$$
\\iv) The rigidity of $E_1 + {\cal E}_1$ comes from the equality~\eqref{sdlines}. Indeed, the intersection product $$E_1 (E_1 + {\cal E}_1) = -1 < 0$$ shows that all members of $|E_1 + {\cal E}_1|$ must contain $E_1$, thus $$|E_1 + {\cal E}_1| = E_1 + |{\cal E}_1| = \{E_1 + {\cal E}_1\}.$$\\
v) If $R$ is irreducible and contracted by $f$, then $H R = 0$. By part $i)$ $|H|$ is big, so $R^2 < 0$ by Hodge Index Theorem, see~\cite{TieLuo1990}. Using adjunction formula (see~\cite{EnriquesII2025}, Lemma 9.1.1), we get: $$0 > R^2 = 2 p_a (R) - 2 + \frac{1}{2} R C \ge -2.$$ This proves that $p_a (R) = 0,$ so that $R$ is rational and smooth, and also that $$R^2 \in \{-1, -2\}.$$ If $R$ is a $(-1)$-curve, then $R C = 2$ still by adjunction formula. Moreover $R$ is not one of the $E_i$ by part $ii)$, and thus $$0 = H R = 2 + R (E_1 + E_2 + E_3) \ge 2,$$ a contradiction. Hence $R$ is a smooth rational $(-2)$-curve.\\
vi) We know by the part $ii)$ that the image $f (C)$ is a smooth conic contained in a plane $\Pi$. In this plane there are also the three lines $f (E_i)$. The fact that the restrictions
$e_i := E_i \cdot C$ belong to the same $g_2^1$ of $C$ is thus equivalent to state that these three lines are concurrent at a point $p \in \Pi$.\\
By equality~\eqref{sdlines}, we have ${\cal E}_i E_j = 1 > 0$ for all $i \ne j$, hence the three double lines $f ({\cal E}_i)$ are all concurrent at $p$ as well. In particular, there exists a plane $\Pi'$ which contains both $f (E_1), f ({\cal E}_1)$. This corresponds to an effective divisor $R$ such that $$H = R + E_1 + {\cal E}_1.$$ We use parts $i), ii), iii)$ to compute the intersection $H R$: $$H R = H (H - E_1 - {\cal E}_1) = 5 - 1 - 2 = 2.$$ The image $f (R)$ cannot be a double line $2 l$, otherwise the hyperplanes through $l$ would define a pencil of divisors in the linear system $|H - R| = |E_1 + {\cal E}_1|$, which is forbidden by part $iv)$.\\
Thus the image of $R$ in $\Pi'$ is either a smooth plane conic or the union of two distinct lines. If some component of $R$ is contracted by $f$ we are done by part $v)$, so we can assume that each component of $R$ has $1$-dimensional image. This leaves only two cases: either $R \simeq \mathbb{P}^1$ is isomorphic to a smooth plane conic of $\Pi'$, or $R = A + B,$ where $A, B$ are isomorphic to two distinct lines.\\ In the first case, we just compute the self - intersection $R^2$ using parts $i), ii)$ and $iii)$: $$R^2 = (H - E_1 - {\cal E}_1)^2 = 5 - 1 - 2 H E_1 - 2 H {\cal E}_1 = -2,$$ as required.\\ If $R = A + B$ splits as a sum of two copies of $\mathbb{P}^1$, we want to show that $A^2 = B^2 = -2$. First step is to note that $R$ cannot move. Indeed, $$|R| = |H - E_1 - {\cal E}_1|,$$ and the latter linear system contains just one element, since $\Pi'$ is the only plane containing both $f (E_1), f ({\cal E}_1)$. As a consequence, neither $A$ or $B$ can move. Hence the short exact sequence $$0 \to H^0({\cal O}_X) \to H^0({\cal O}_X (A)) \to H^0({\cal O}_A (A)) \to 0$$ gives $$H^0 ({\cal O}_A (A)) = 0.$$ Since $A$ is isomorphic to $\mathbb{P}^1$, we have: $$H^0 ({\cal O}_A (A)) = H^0 ({\cal O}_{\mathbb{P}^1} (A^2))$$ and thus $$A^2 < 0.$$ At this point we argue as in part $v)$: the self - intersection $A^2$ can only take values in $\{-1, -2\}$. If $A^2 = -1$ then $$H A = A (-2 K_X) + A (E_1 + E_2 + E_3) \ge 2,$$ contradicting the fact that $A \to f (A)$ is an isomorphism on a line. Hence $A^2 = -2$, and the same holds for $B$. $\square$\\\\
The following result is not strictly part of this article. Nonetheless, it is a direct consequence of the previous Lemma, see also the author's doctoral thesis~\cite{Pieroni2025}. It contributes to the geometric description of the quintic model $f(X)$ of a Coble surface $X$. We recall that a Coble surface $X$ is said to be nodal if it contains a curve of self - intersection $-2$ and biregular to $\mathbb P^1$.\\
Given an unnodal Coble surface $X$ with irreducible boundary $C$, and three disjoint $(-1)$-curves $E_1, E_2, E_3$, the linear system $$H = C + E_1 + E_2 + E_3$$ defines a birational morphism of $X$ on a singular quintic surface $\overline X \subset \mathbb{P}^3$. 
\begin{corollary}\label{quintic}
If $X$ is unnodal, the surface $\overline X$ contains a tetrahedron $T$. A vertex $p_0$ of $T$ is a triple point of $\overline X$, and the three edges of $T$ concurrent at $p_0$ are double edges. The other edges form a triangle, and are simple edges of $\overline X$.
\end{corollary}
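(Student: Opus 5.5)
The plan is to read off the tetrahedron directly from Lemma~\ref{propzero}, working entirely inside the plane arrangement of $\overline X = f(X)$.

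\textbf{Faces and vertex.} Let $\Pi$ be the plane containing $f(C)$ and the three lines $f(E_1), f(E_2), f(E_3)$ (part ii). Since $X$ is unnodal, part vi) shows that the restrictions $(E_i)|_C$ do not lie in a common $g_2^1$. Equivalently, the three lines $f(E_i)$ in $\Pi$ are not concurrent, so they form a triangle in $\Pi$. This triangle gives the three ``simple edges.''

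Next take the three double lines $\ell_i := f({\cal E}_i)$ from part iii). By \eqref{sdlines} we have ${\cal E}_i{\cal E}_j = 1$ for $i\neq j$, so each pair $\ell_i,\ell_j$ meets. Also ${\cal E}_i E_j = 1$, so $\ell_i$ meets $f(E_j)$ for $j\ne i$. I claim $\ell_i$ is not contained in $\Pi$. Otherwise $\Pi$ would contain the image of ${\cal E}_i$, which would force ${\cal E}_i \le H - C$ in the linear-system sense. But $h^0(H-C)=1$ and $|E_1+E_2+E_3|$ is rigid, so this cannot happen. Hence $\ell_i$ meets $\Pi$ in exactly one point, which must be $f(E_j)\cap f(E_k)$ for $\{i,j,k\}=\{1,2,3\}$.

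Three pairwise intersecting lines that are not all coplanar are concurrent. Suppose instead they were coplanar, in a plane $\Pi''$. Then $\Pi''$ would contain the three vertices of the triangle, so $\Pi''=\Pi$, a contradiction. So the $\ell_i$ meet at a common point $p_0\notin\Pi$. This gives the tetrahedron $T$ with vertex $p_0$, the three double edges $\ell_i$ through $p_0$, and the opposite face triangle formed by the $f(E_i)$. The faces are $\Pi$ and the planes spanned by $\ell_j,\ell_k$, which are the unique members of $|H-{\cal E}_j-{\cal E}_k|=\{E_i\}$ plus the residual.

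\textbf{Triple point.} Three double lines through $p_0$ give at least multiplicity $2$ there. To reach $3$, I would argue as follows. Consider a general plane through $p_0$. Its section is a plane quintic with nodes along three concurrent lines, meeting at $p_0$. Alternatively, look at $f^{-1}(p_0)$. The point $p_0$ is the image of points of ${\cal E}_i\cap{\cal E}_j$, since ${\cal E}_i{\cal E}_j=1$. For unnodal $X$, part v) shows that $f$ contracts no curves, so $f$ is finite. Then the multiplicity of $\overline X$ at $p_0$ is the sum of the local multiplicities over the preimages, and three distinct branch points (one for each pair $i<j$) give multiplicity at least $3$.

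For the upper bound, a plane through $p_0$ and a general point of $\overline X$ shows the multiplicity is at most $5-2=3$. Equivalently, project from $p_0$: the degree drops to $5-m$, and it must be at least $2$ because the double lines prevent the image from being a plane.

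\textbf{Main obstacle.} I expect the hardest step to be the exact multiplicity computation at $p_0$, in particular showing that the three preimage points ${\cal E}_i\cap{\cal E}_j$ are distinct and each contributes exactly $1$. I would try to prove this using ${\cal E}_i^2=0$ together with the rigidity statements in parts iii) and iv).

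The simplicity of the triangle edges is easier. Use $HE_i=1$ and the fact that $E_i$ is the only curve mapping onto $f(E_i)$. Indeed, any other curve mapping there would lie in $|H-C-\dots|$ and contradict the rigidity computations.
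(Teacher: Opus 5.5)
\textbf{The tetrahedron.} Your construction follows the paper. Part vi) of Lemma~\ref{propzero} gives the triangle $f(E_1)+f(E_2)+f(E_3)\subset\Pi$. The equality ${\cal E}_iE_j=1$ puts each $\ell_i$ through a vertex. Pairwise incidence of the $\ell_i$ (the paper phrases it as coplanarity via $H={\cal E}_i+{\cal E}_j+E_k$) then forces a common point $p_0\notin\Pi$. Your check that $\ell_i\not\subset\Pi$ is a step the paper leaves implicit. It is cleanest to argue that $f^*\Pi=C+E_1+E_2+E_3$ and none of these curves maps onto $\ell_i$.

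\textbf{The gap: the triple point.} Your upper bound does not work.
\begin{itemize}
\item A line through $p_0$ and a general point gives only $m\le 4$. So does any plane section you can actually write down: the face $\langle\ell_1,\ell_2\rangle$ cuts $2\ell_1+2\ell_2+f(E_3)$.
\item Double lines through $p_0$ do not force the projection from $p_0$ to have degree $5-m\ge 2$. The paper's own normal form with $q(1,0,0,0)=0$, for example $X_0(X_1^2X_2^2+X_1^2X_3^2+X_2^2X_3^2)+X_1X_2X_3(X_1^2+X_2^2+X_3^2)=0$, has exactly the prescribed double and simple edges and a quadruple point at $p_0$.
\end{itemize}
So the configuration of lines alone cannot decide between $m=3$ and $m=4$. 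In the paper's route, $m\ge 3$ is automatic, because every admissible monomial has $X_0$-degree at most $2$. This also repairs your lower bound without any distinctness assumption. But $m=3$ is exactly the condition that $q$ contains $X_0^2$, and that has to come from the geometry of $X$.

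\textbf{The missing idea.} You need ${\cal E}_1\cap{\cal E}_2\cap{\cal E}_3=\emptyset$. This is the distinctness you flagged but did not prove. Here is one way to get it and conclude.
\begin{itemize}
\item Since $f$ is finite, ${\cal E}_i$ and ${\cal E}_j$ have no common component, because it would be contracted to $p_0$. Hence they meet transversally in one point $r_{ij}$, and $f^{-1}(p_0)\subseteq\{r_{12},r_{13},r_{23}\}$.
\item The three face planes through $p_0$ generate its ideal and pull back to ${\cal E}_i+{\cal E}_j+E_k$. If $r_{ij}\notin{\cal E}_k$, they give the local ideal $(x,y)$ at $r_{ij}$, where $x,y$ are local equations of ${\cal E}_i,{\cal E}_j$. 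So $f$ is an immersion at each $r_{ij}$, and $m=3$.
\item Suppose instead a point $r$ lay on all three curves. Three pairwise transversal smooth branches give the ideal ${\cal I}_r^2$. On ${\rm Bl}_rX$, with exceptional curve $G$, the pulled-back planes through $p_0$ form a base-point-free net in $|H-2G|$ with $(H-2G)^2=1$. This is a birational morphism onto $\mathbb{P}^2$, so its general member is rational. But $H^2=5$ and $HK_X=-1$ give $p_a(H-2G)=2$, a contradiction.
\end{itemize}

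\textbf{Simple edges.} Knowing that $E_i$ is the only curve over $f(E_i)$ does not exclude a cuspidal edge. Simplicity follows instead from $\Pi\cdot\overline X=f(C)+f(E_1)+f(E_2)+f(E_3)$, with each line appearing once.
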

{\it Proof:} We already know that $|H|$ defines a regular, birational morphism onto a quintic surface $\overline X \subset \mathbb{P}^3$.
The lines $f (E_1), f (E_2), f(E_3)$ are distinct and coplanar. If they were linearly dependent, then $X$ would be nodal by part $vi)$ of Lemma~\ref{propzero}, against the hypothesis.
Then they form a triangle. We also know that $E_i {\cal E}_j = 1$ for all $i \ne j$, hence the simple line $f (E_i)$ must meet the double line $f ({\cal E}_j)$. This means that each $f ({\cal E}_i)$ passes through a vertex of the triangle $f (E_1) + f (E_2) + f (E_3)$. Moreover, the decomposition~\eqref{dec}: $$H = {\cal E}_i + {\cal E}_j + E_k$$ shows that $f ({\cal E}_i), f({\cal E}_j)$ are pairwise coplanar, for all permutations $(i, j, k)$. This implies that the double lines $f ({\cal E}_i)$ share a common point $p_0$, which is the fourth vertex of the required tetrahedron $T$.
In a suitable system $[X_0, X_1, X_2, X_3]$ of projective coordinates in $\mathbb{P}^3$, we can assume that $$p_0 = [1, 0, 0, 0]$$ and that the remaining three vertices are: $$[0, 1, 0, 0], [0, 0, 1, 0], [0, 0, 0, 1].$$ A quintic surface $\overline X$, with  prescribed multiplicities as above along the edges of the tetrahedron $T$, is defined by a general equation as follows: $$\alpha X_0 X_1^2 X_2^2 + \beta X_0 X_1^2 X_3^2 + \gamma X_0 X_2^2 X_3^2 + X_1 X_2 X_3 q (X_0, X_1, X_2, X_3) = 0,$$ where $\alpha, \beta, \gamma \in \complex$ and $q$ is a quadric in four projective variables. This shows that $p_0$ is a triple point for $\overline X$ and it concludes the proof. $\square$
\begin{remark}\label{moduli}
The author proved in his Ph. D. thesis that a partial converse to Corollary~\ref{quintic} is also true. Namely, we showed that, given a quintic surface $\overline X \subset \mathbb{P}^3$ defined by a generic expression as above, if the normalization $X^{\nu}$ is smooth, then it is a Coble surface.\\
Note that these quintics depend on $13$ parameters, up to rescaling, thus they move in an open subset $U$ of a space $\mathbb{P}^{12}$. Moreover, these expressions are preserved by the linear group $G$ of change of variables: $$X_0' = \lambda_0 X_0,$$ $$X_1' = \lambda_1 X_i,$$ $$X_2' = \lambda_2 X_j,$$ $$X_3' = \lambda_3 X_k,$$ where $(i, j, k)$ is a permutation of the indices $(1, 2, 3)$. The projectivized group $\mathbb{P} (G)$ has dimension $3$, thus the quotient $$U // \mathbb{P} (G)$$ has dimension $9$, as well as the moduli space of Coble surfaces.
\end{remark}

\newpage
\section{The main result}
In~\cite{Pompilj1940}, Pompilj considered the following construction: %let $\overline C \subset \mathbb{P}^2$ be an irreducible sextic curve with $10$ double points $p_1, \ldots, p_{10}$. The blow - up $$X := Bl_{p_1, \ldots, p_{10}} \mathbb{P}^2$$ is a Coble surface. With the decomposition $$Pic (X) = \integer L \oplus \bigoplus_{i = 1}^{10} \integer E_i,$$ consider the linear system $${\cal P}_1 := 6 L - 2 E_2 - \cdots - 2 E_{10}.$$ This has no base points, since it is generated by two disjoint curves, namely $C + 2 E_1$ and $2 {\cal E}_1$, . Using the adjunction formula one shows that $$p_a ({\cal P}_1) = 1,$$ thus the generic member of ${\cal P}_1$ is a smooth elliptic curve. 

\begin{definition}\label{def}
Let $p_1, \ldots, p_{10}$ be nodes for an irreducible sextic $\overline C \subset \mathbb{P}^2$. For each $k = 1, 2, 3$, consider the Del Pezzo surface $$Y_k := Bl_{p_k, p_4, \ldots, p_{10}} \mathbb{P}^2$$ of degree $1$, with its Bertini involution $$i_k : Y_k \to Y_k.$$ Consider also the Coble surface $$X := Bl_{p_1, \ldots, p_{10}} \mathbb{P}^2,$$ whose boundary curve $C \subset X$ is the strict transform of $\overline C$.\\
We know that $$Pic (X) = \integer L \oplus \bigoplus_{i = 1}^{10} \integer E_i.$$ With this decomposition, consider for each index $j = 1, 2, 3$ the linear system $${\cal P}_j := 6 L - 2 E_1 - \cdots - 2 \check{E_j} - \cdots - 2 E_{10}$$ of strict transforms of sextics which are nodal at $p_s$, for all $s \ne j$.
\end{definition}
Each ${\cal P}_j$ is a pencil of curves without base points, since it is generated by two disjoint sections, namely $C + 2 E_j$ and $2 {\cal E}_j$, where $${\cal E}_j := 3 L - E_1 \cdots - \check{E}_j - \cdots - E_{10}$$ is the strict transform of the unique plane cubic passing through all $p_s$, with $s \ne j$. Moreover, each $H \in |{\cal P}_j|$ has arithmetic genus $$p_a (H) = 1,$$ since it is the strict transform of a plane sextic with $9$ double points. Thus the generic fibers of ${\cal P}_j$ are smooth elliptic curves. Note that all $E_i$ with $i \ne j$ are bisections for ${\cal P}_j$. On a smooth member $H \in {\cal P}_1$, Pompilj considers the automorphism $T_1 : H \to H$ defined by the following condition: $$\mathcal O_H(T_1 (x) - x ) \simeq  {\cal O}_H (E_2 - E_3) \quad \forall x \in H. $$ 
In a similar way, for a smooth fiber $H \in |{\cal P}_2|$, Pompilj defined $$T_2 (x) - x \simeq_H {\cal O}_H (E_3 - E_1) \quad \forall x \in H,$$ and for $H \in |{\cal P}_3|$ he set: $$T_3 (x) - x \simeq_F {\cal O}_H (E_1 - E_2) \quad \forall x \in H.$$
Pompilj claimed that $$(T_1 \circ T_2 \circ T_3)|_C = \mathbb{1}_C.$$ Coble showed that this happens only in a $1$-codimensional family of Coble surfaces. Our claim is to show that this is a family of nodal Coble surfaces.

\begin{corollary}[\cite{Coble1939, Dolg2019}]\label{lift}
Let $p_1, \ldots, p_{10}$ be nodes for an irreducible sextic $\overline C \subset \mathbb{P}^2$. Using the notations of Definition~\ref{def}, for each permutation $(i, j, k)$ of indices $(1, 2, 3)$ we have:\\
i) The Bertini involution $i_k : Y_k \to Y_k$ lifts to the Coble surface $X = Bl_{p_i, p_j} Y_k$ as an involution $i_k' : X \to X$ which preserves the curve $C$.\\
ii) The action of $i_k'$ on $C \simeq \mathbb{P}^1$ is associated to the $g_2^1$ generated by $(E_i)|_C, (E_j)_C$.\\
iii) The lifted involution $i_k'$ preserves each fiber $H$ of $|{\cal P}_j|$. If $H$ is also smooth, then $$x + i_k' (x) \in |{\cal O}_H (E_i)| \quad \forall x \in H.$$
\end{corollary}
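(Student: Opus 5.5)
The plan is to deduce all three statements from Proposition~\ref{liftgeiser} applied twice, together with the description of the Bertini involution on curves of $|-K_Y|$ and $|-2K_Y|$ recalled in Section 1.

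\textbf{Step 1: lifting $i_k$.} Since $\overline C$ is a sextic with nodes at $p_k,p_4,\ldots,p_{10}$, its strict transform $\overline C_k\subset Y_k$ lies in $|-2K_{Y_k}|$. It is irreducible and has double points at the images of $p_i,p_j$. By Proposition~\ref{fix}, $p_i$ and $p_j$ are fixed by $i_k$, and $i_k$ swaps the two tangent branches of $\overline C_k$ at each of them. Since $i_k$ fixes $p_i$, it lifts to an automorphism of $Bl_{p_i}Y_k$. Since $p_i$ lies on the fixed curve $R$ rather than being the isolated point $p_b$, it acts on the exceptional curve over $p_i$ as a nontrivial involution, and no indeterminacy arises. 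The same holds at $p_j$; the points are distinct, so blowing up both is harmless. This gives $i_k'$ on $X$. It preserves $C$ because $i_k$ preserves $\overline C_k$, hence its strict transform; alternatively, $C$ is the unique member of $|-2K_X|$ and any automorphism preserves $K_X$.

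\textbf{Step 2: the action on $C$.} On $X$ the curve $C\simeq\mathbb P^1$ is the normalization of $\overline C_k$. On $\overline C_k$ the involution satisfies $y+i_k(y)\in|K_{\overline C_k}|$ on the arithmetic-genus-$2$ curve, meaning the fibres of the projection to the conic $H\cap\overline S$. Pulling back to the normalization, the involution $\sigma_k=i_k'|_C$ is the one defined by the degree $2$ map $C\to$ conic. This map is given by the adjoint system $|{\cal O}_{C}(-K_{Y_k})|$ restricted, which has degree $2$.

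The preimages of the nodes $p_i$ and $p_j$ are the divisors $E_i|_C$ and $E_j|_C$. By Proposition~\ref{fix}, $i_k$ swaps the two branches at each node, so each of these divisors is a fibre of $\sigma_k$. Two distinct fibres of a $g^1_2$ on $\mathbb P^1$ determine it, so the $g^1_2$ is the one spanned by $E_i|_C$ and $E_j|_C$. I expect this identification — checking that the swap of branches really means each of $(E_i)|_C$ and $(E_j)|_C$ is a fibre — to be the main point. It needs the local analysis of Proposition~\ref{fix} transported through the blow-up.

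\textbf{Step 3: the pencil.} The pencil $|{\cal P}_j|$ on $X$ equals the pullback of $|-2K_{Y_k}|$ intersected with curves having multiplicity $2$ at $p_i$, minus the exceptional curve over $p_j$. More directly, ${\cal P}_j=-2K_X+2E_j$, and on $Y':=Bl_{p_i}Y_k=Bl_{p_i}$ of the degree $1$ surface this is $-2K$ of $Y'$ composed with fixing the node. Each member is the strict transform of a member of $|-2K_{Y_k}|$ singular at $p_i$. Since $i_k$ preserves every member of $|-2K_{Y_k}|$ and fixes $p_i$, $i_k'$ preserves every member of ${\cal P}_j$.

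For a smooth member $H$, which is elliptic, $H$ is the normalization at $p_i$ of a genus-$2$ curve $D\in|-2K_{Y_k}|$ with a node at $p_i$. The relation $y+i_k(y)\in|K_D|$ pulls back to $|{\cal O}_H(\nu^*K_D)|=|K_H+E_i|_H|$ (by adjunction for normalization) $=|{\cal O}_H(E_i)|$, since $K_H=0$. This gives (iii). The case $p_j$ is handled symmetrically using ${\cal P}_i$; this is not required by the statement.
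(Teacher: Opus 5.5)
Your proof is correct. For (i), (ii) and the invariance part of (iii) it is the paper's argument. Proposition~\ref{fix}, applied to the strict transform of $\overline C$ in $|-2K_{Y_k}|$, shows that $p_i,p_j$ are fixed and their branches are swapped. Hence $i_k$ lifts, $(E_i)|_C$ and $(E_j)|_C$ are two distinct fibres of $\sigma_k$ spanning its $g^1_2$, and every member of $|{\cal P}_j|=|-2\tau_k^*K_{Y_k}-2E_i|$ is invariant.

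Two slips should be fixed. Your opening sentence attributes the argument to Proposition~\ref{liftgeiser}, which you never use; it compares the Geiser involution of a degree-$2$ surface with the Bertini involution of its blow-up, so it is irrelevant here. In Step~3 the phrase ``minus the exceptional curve over $p_j$'' is wrong, although the formula ${\cal P}_j=-2K_X+2E_j$ you give right after is correct.

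The one genuine difference is in the last claim of (iii). The paper shows $H/i_k'\cong\tau_k(H)/i_k\cong\mathbb{P}^1$, so all pairs $x+i_k'(x)$ are linearly equivalent. It then uses the branch swap at $p_i$ again to see that $H\cap E_i$ is one of these pairs.

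You instead compute the class of the pairs directly on $D=\tau_k(H)$, where $\tau_k:X\to Y_k$ blows down $E_i,E_j$ and $\nu:H\to D$ is the normalization. The pairs are cut by $|-K_{Y_k}|$, hence lie in $|\omega_D|$, and $\nu^*\omega_D\cong\omega_H(E_i|_H)\cong{\cal O}_H(E_i)$ since $K_H=0$. Equivalently, adjunction on $X$ gives $0=K_H=(-\tau_k^*K_{Y_k}-E_i+E_j)|_H$ with $E_j\cdot H=0$.

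Your route avoids the local analysis at the node in this step, at the cost of the conductor formula. The paper's route is more elementary but leans on Proposition~\ref{fix} once more. Note that your derivation gives the relation only for $x\notin E_i$; the two points of $H\cap E_i$ then follow by continuity.
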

{\it Proof:} In all the proof, the triple $(i, j, k)$ will be a permutation of $(1, 2, 3)$. We will denote by $$\tau_k : X \to Y_k$$ the blow down of $E_i, E_j$.\\
$i)$ Consider the strict transform $C \subset X$ of $\overline C$. We have $$C = - 2 K_X = -2 \tau_k^* K_{Y_k} - 2 E_i - 2 E_j,$$ so that $\tau_k (C)$ is an irreducible member of $|- 2 K_{Y_k}|$ with double points at $p_i, p_j$. By the Proposition~\ref{fix}, the Bertini involution $i_k$ fixes both $p_i, p_j$, hence it lifts to a biregular involution $i_k'$ on $X = Bl_{p_i, p_j} Y_k$. Moreover, $i_k$ preserves each member of $|- 2 K_{Y_k}|$, so in particular it preserves $\tau_k (C)$. This implies that the strict transform $C$ is preserved by the lift $i_k'$.\\\\
$ii)$ By Proposition~\ref{fix}, $i_k$ swaps the tangent directions of $\tau_k (C)$ at both $p_i, p_j$. Consequently, the lift $i_k'$ swaps the two points in each pair $C \cap E_i, C\cap E_j$. This concludes the proof, since any involution $i$ on $\mathbb{P}^1 \simeq C$ is determined by the $g_2^1$ of the pairs $x + i (x), x \in \mathbb{P}^1$.\\\\
$iii)$ If $H \in |{\cal P}_j|$, then its class in $X$ is given by $$H = -2 \tau_k^* K_{Y_k} - 2 E_i.$$ Then $H$ is the strict transform of an element $\tau_k (H)$ in $|- 2 K_{Y_k}|$ with a double point at $p_i$. The Bertini involution $i_k$ preserves each member of $|- 2 K_{Y_k}|$, so in particular it preserves $\tau_k (H)$. Consequently, the lift $i_k'$ preserves $H$.\\
Assume now that $H$ is smooth. Then $\tau_k (H)$ has $p_i$ as its unique singular point. This is obtained attaching together the two points in $H \cap E_i$, which correspond to the tangent directions of $\tau_k (H)$ at $p_i$. These tangent directions are swapped by $i_k$ by the Proposition~\ref{fix}, hence $i_k'$ swaps the two points $H \cap E_i$. Consequently, there exists an isomorphism: $$\frac{H}{i_k'} \simeq \frac{\tau_k (H)}{i_k}.$$ The quotient on the right side is known, since $\tau_k (H)$ is an irreducible member of $|- 2 K_{Y_k}|$, quotiented by the Bertini involution $i_k$. This corresponds to a smooth hyperplane section of the quotient quadric cone $\frac{Y_k}{i_k} \subset \mathbb{P}^3$, which is just a copy of $\mathbb{P}^1$. Hence $$\frac{H}{i_k'} \simeq \mathbb{P}^1.$$ This means that all the pairs $x + i_k' (x)$ are linearly equivalent one each other, as $x$ moves in $H$. But one of this pairs is exactly $H \cap E_i = {\cal O}_H (E_i)$, thus $$x + i_k' (x) \in |{\cal O}_H (E_i)| \quad \forall x \in H, \quad \forall H \in |{\cal P}_j| {\rm\, smooth}.$$ $\square$\\\\
\begin{corollary}[\cite{Coble1939, Dolg2019}]\label{comp}
The three birational morphisms $T_1, T_2, T_3 : X \dasharrow X$ are the compositions $$T_1 = i_3' \circ i_2', \quad T_2 = i_1' \circ i_3', \quad T_3 = i_2' \circ i_1'.$$ In particular, $T_1, T_2, T_3$ extend to biregular automorphism of $X$ into itself.
\end{corollary}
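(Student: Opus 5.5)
We have to show $T_1 = i_3'\circ i_2'$; the other two identities follow by cyclically permuting the indices. Work on a smooth fiber $H \in |{\cal P}_1|$, where $T_1$ is defined by $\mathcal O_H(T_1(x)-x)\simeq \mathcal O_H(E_2-E_3)$. The goal is to check that $i_2'$ and $i_3'$ both preserve $H$, compute their composition on $H$ via Corollary~\ref{lift}, and then use density of smooth fibers.

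Corollary~\ref{lift} iii) is stated for the permutation $(i,j,k)$ with $i_k'$ preserving fibers of $|{\cal P}_j|$. Since ${\cal P}_j$ is built from the points $p_s$, $s\neq j$, we have $\tau_k(H)\in|-2K_{Y_k}|$ nodal at $p_i$ exactly when $H$ is nodal at every $p_s$ except $p_j$. For $H\in{\cal P}_1$ the relevant involutions are therefore $i_k'$ with $j=1$:
\begin{itemize}
\item $k=2$, $i=3$: $i_2'$ preserves $H$ and satisfies $x+i_2'(x)\in|\mathcal O_H(E_3)|$;
\item $k=3$, $i=2$: $i_3'$ preserves $H$ and satisfies $y+i_3'(y)\in|\mathcal O_H(E_2)|$.
\end{itemize}
The sign conventions here should be checked carefully against the statement of Corollary~\ref{lift}.

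Now compose. Put $y=i_2'(x)$. Then
$$i_3'(i_2'(x)) \sim E_2 - i_2'(x) \sim E_2 - (E_3 - x) = x + (E_2-E_3)$$
as divisor classes on $H$. Hence $\mathcal O_H(i_3'i_2'(x)-x)\simeq\mathcal O_H(E_2-E_3)$. On an elliptic curve the point $z$ with $z - x$ in a given degree-zero class is unique, so $i_3'\circ i_2'$ agrees with $T_1$ on $H$.

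The smooth members of $|{\cal P}_1|$ cover a dense open subset of $X$, so $T_1$ and the biregular automorphism $i_3'\circ i_2'$ agree as rational maps. This gives $T_1 = i_3'\circ i_2'$, and in particular $T_1$ extends to a biregular automorphism of $X$, since each $i_k'$ is biregular by Corollary~\ref{lift} i). The cases of $T_2$ and $T_3$ follow by the cyclic shift $(1,2,3)\to(2,3,1)$ of indices.

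The main obstacle is bookkeeping: matching which $i_k'$ preserves which pencil ${\cal P}_j$, and making sure the order of composition yields $E_2-E_3$ rather than its negative. If the order comes out reversed, $T_1$ is still the inverse composition, and the author's convention determines which form is stated.
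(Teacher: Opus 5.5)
Your proposal is correct and follows essentially the same route as the paper. Both arguments restrict to a smooth fiber $H \in |{\cal P}_1|$ and use Corollary~\ref{lift} iii) to get $x + i_2'(x) \in |{\cal O}_H(E_3)|$ and $x + i_3'(x) \in |{\cal O}_H(E_2)|$; composing gives $i_3'(i_2'(x)) \sim x + E_2 - E_3$, and genus one together with density of smooth fibers finishes. Your index bookkeeping and order of composition check out, so the closing hedge about a possibly reversed order is unnecessary.
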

{\it Proof:} We prove just the first equality. By construction, $T_1$ acts fiberwise on each smooth member of $|{\cal P}_1|$. By Corollary~\ref{lift}, both $i_2', i_3'$ preserve each fiber of this pencil. Since the union of smooth fibers is dense in $X$, it suffices to show that the equality $T_1 = i_3' \circ i_2'$ holds on each smooth fiber $H$. Still by Corollary~\ref{lift}, we know that $$x + i_2' (x) \in |{\cal O}_H (E_3)| \quad \forall x \in H$$ and $$x + i_3' (x) \in |{\cal O}_H (E_2)| \quad \forall x \in H.$$ By definition of $T_1$, for any $x \in H$ we have: $$T_1 (x) \simeq {\cal O}_H (E_2 - E_3 + x) =$$ $$= {\cal O}_H (E_2 - (E_3 - x)) = {\cal O}_H (E_2 - i_2' (x)) \simeq i_3' (i_2' (x)).$$ Since $H$ has genus $1$, this forces $$T_1 (x) = i_3' (i_2' (x)).$$
Finally, Corollary~\ref{lift} states that all the $i_k'$'s are biregular on $X$, thus all the $T_j$'s extend to biregular automorphisms of $X$. $\square$\\
We need the following lemma before the final statement.
\begin{lemma}\label{codtre}
Let $\sigma_k : C \to C$ be the restrictions $\sigma_k := (i_k')|_C, k = 1, 2, 3$. Let $H := C + E_1 + E_2 + E_3$ be the polarization considered in Proposition~\ref{propzero}. Assume that $(E_1)|_C, (E_2)|_C, (E_3)|_C$ are linearly independent divisors. By Corollary~\ref{quintic}, the image surface $\overline X \subset \mathbb{P}^3$ is a quintic with equation \begin{eqnarray}\label{Cobleq} \alpha X_0 X_1^2 X_2^2 + \beta X_0 X_1^2 X_3^2 + \gamma X_0 X_2^2 X_3^2 + X_1 X_2 X_3 q = 0,\end{eqnarray} with $\alpha, \beta, \gamma \in \complex \setminus 0$, and $q = q (X_0, X_1, X_2, X_3)$ is a quadric form.\\ Assume that $$\sigma_3 \circ \sigma_2 \circ \sigma_1 = \mathbb{1}_C.$$ Then the quadric $q$ does not contain the terms $X_1 X_2, X_1 X_3, X_2 X_3$.\\
In particular, Coble surfaces with this property form a $3$-codimensional family in the moduli space of Coble surfaces.
\end{lemma}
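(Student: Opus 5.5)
Everything can be read off inside the plane $\Pi=\{X_0=0\}$ that contains $f(C)$ and the three lines $f(E_k)$. In the coordinates of Corollary~\ref{quintic}, the triangle $f(E_1)+f(E_2)+f(E_3)$ is cut on $\Pi$ by $X_1X_2X_3=0$. Restricting equation~\eqref{Cobleq} to $X_0=0$ gives
\[
X_1X_2X_3\, q(0,X_1,X_2,X_3)=0,
\]
so $f(C)$ is the conic $\{q(0,X_1,X_2,X_3)=0\}\subset\Pi$. Write
\[
q(0,X_1,X_2,X_3)=\sum_i a_{ii}X_i^2+a_{12}X_1X_2+a_{13}X_1X_3+a_{23}X_2X_3 .
\]
The divisors $e_k=(E_k)|_C$ are cut on this conic by the lines $X_{m}=0$, with the indices matched to the labelling of the vertices. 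By Corollary~\ref{lift} ii), $\sigma_k$ is the involution of the $g^1_2$ spanned by $e_i,e_j$. On the conic, that $g^1_2$ is cut by the lines through the point $f(E_i)\cap f(E_j)$, which is a vertex of the triangle, say $v_k$. Hence $\sigma_k$ is the restriction of the projective involution of $\Pi$ with centre $v_k$ and axis the polar line of $v_k$ with respect to $f(C)$. In particular its fixed pair is cut out by the polar of $v_k$.

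Since the three lines are not concurrent (this is the hypothesis that the $e_k$ are linearly independent), Proposition~\ref{casodue} applies to $\sigma_3\circ\sigma_2\circ\sigma_1=\mathbb{1}_C$. It says that each $e_k$ is exactly the fixed pair of $\sigma_k$. The pair $e_k$ is cut by the side of the triangle opposite $v_k$. So the polar of $v_k$ meets the conic in the same two points as that opposite side, and therefore the polar of $v_k$ is the opposite side. This means the coordinate triangle is self-polar for $f(C)$.

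For a vertex $e_1=[1,0,0]$, the polar line is $\sum_j \partial_{X_1}q\,X_j=0$, that is,
\[
2a_{11}X_1+a_{12}X_2+a_{13}X_3=0 .
\]
Requiring this to be the line $X_1=0$ forces $a_{12}=a_{13}=0$. Doing the same at the other two vertices gives $a_{23}=0$. Since $q(0,X_1,X_2,X_3)$ contains exactly the monomials $X_iX_j$ with $1\le i,j\le3$, the terms $X_1X_2,X_1X_3,X_2X_3$ are absent from $q$. Conversely, a self-polar triangle makes each $\sigma_k$ the reflection in the opposite side, and these three reflections commute and compose to the identity. So the condition is equivalent to the vanishing of these three coefficients.

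For the codimension claim, I would use the parameter count of Remark~\ref{moduli}. Quintics of the form~\eqref{Cobleq} fill an open set $U\subset\mathbb{P}^{12}$, and the group $\mathbb{P}(G)$ of diagonal rescalings (up to permuting $X_1,X_2,X_3$) has dimension $3$ and acts with finite generic stabilizers, so $U/\!/\mathbb{P}(G)$ has dimension $9$. Under $G$ the coefficient of $X_iX_j$ only gets rescaled by $\lambda_i\lambda_j$ times an overall factor. So the locus $\{a_{12}=a_{13}=a_{23}=0\}$ is a $G$-invariant linear subspace of codimension $3$, and its image in the $9$-dimensional quotient has codimension $3$. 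I expect the main obstacle to be showing that this $G$-stable linear subspace still meets the open set $U$ where the normalization is a smooth Coble surface, so that the family is nonempty of the expected dimension. I would check this on an explicit example, for instance one with $q$ diagonal in $X_1,X_2,X_3$ plus generic terms involving $X_0$. I would also need the triangle to be determined up to finitely many choices by $X$, so that no extra moduli are lost when passing to the quotient.
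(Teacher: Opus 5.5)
Your proposal is correct and follows the paper's proof essentially step for step. Like the paper, you restrict to the plane $\{X_0=0\}$ and note that the fixed pair of $\sigma_k$ is cut by the polar of the vertex $f(E_i)\cap f(E_j)$; you then use Proposition~\ref{casodue} to force that polar to be the opposite side $f(E_k)=\{X_k=0\}$, read off the vanishing of the mixed coefficients of $q(0,X_1,X_2,X_3)$, and finish with the same $\mathbb{P}(G)$ parameter count as in Remark~\ref{moduli}.

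Your converse and your caveats (non-emptiness of $U\cap\Lambda$, finite generic stabilizers) go beyond what the paper verifies. One adjustment: $X$ determines the triple of disjoint $(-1)$-curves only up to countably many choices, not finitely many, but this is still enough for the dimension count.
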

{\it Proof:} Let $$\hat q (X_1, X_2, X_3) := q (0, X_1, X_2, X_3).$$ The plane $\{X_0 = 0\}$ cuts on the singular surface $\overline X$ the reducible curve $$X_1 X_2 X_3 \hat q = 0,$$ where the plane conic $V (\hat q)$ is the isomorphic image of $C$, while each line $\{X_i = 0\}$ is the image of $E_i$. By Corollary~\ref{lift}, the action of $\sigma_1$ on $C$ is associated to the $g_2^1$ generated by $(E_2)|_C, (E_3)|_C$. In the plane $\{X_0 = 0\}$, this $g_2^1$ is cut by the pencil of lines through the point $[0, 1, 0, 0]$. The pair of fixed points of $\sigma_1$ is cut by the polar line to $V (\hat q)$ with center this point, which has equation $$\frac{\partial \hat q}{\partial X_1} = 0.$$ On the other hand, this line must coincide with $\{X_1 = 0\}$ by Proposition~\ref{casodue}, hence there exists a constant $\lambda_1 \ne 0$ such that $$\frac{\partial \hat q}{\partial X_1} = \lambda_1 X_1.$$ Similarly, there are $\lambda_2, \lambda_3 \in \complex \setminus 0$ such that $$\frac{\partial \hat q}{\partial X_2} = \lambda_2 X_2, \quad \frac{\partial \hat q}{\partial X_3} = \lambda_3 X_3.$$ This means that $$\hat q = \frac{1}{2} (\lambda_1 X_1^2 + \lambda_2 X_2^2 + \lambda_3 X_3^2),$$ thus $q$ does not contain the terms $X_i X_j$, with $1 \le i < j \le 3$.\\
For the last part of the statement, we mentioned in Remark~\ref{moduli} that the expressions of the form~\eqref{Cobleq} form a projective space $\mathbb{P}^{12},$ and that there exists an open subset $U \subset \mathbb{P}^{12}$ of surfaces whose normalization is actually a Coble surface. The vanishing of these three coefficients describes a $3$-codimensional linear subspace $\Lambda \subset \mathbb{P}^{12},$ hence we are interested in the intersection $U \cap \Lambda$. Consider the group $G$ of linear transformations: $$X_0' = \lambda_0 X_0,$$ $$X_1' = \lambda_1 X_i,$$ $$X_2' = \lambda_2 X_j,$$ $$X_3' = \lambda_3 X_k,$$ where $(i, j, k)$ is a permutation of the indices $(1, 2, 3).$ The projectivized $\mathbb{P} (G)$ has dimension $3$, and it preserves the space $\mathbb{P}^2$, the open subset $U$ and the linear subspace $\Lambda$. Thus we find an inclusion $$(U \cap \Lambda) // \mathbb{P} (G) \subset U // \mathbb{P} (G),$$ which is the inclusion of a locally closed subset of dimension $6$ in a family of dimension $9$. $\square$

\begin{theorem}\label{final}
Assume that $$(T_1 \circ T_2 \circ T_3)|_C = \mathbb{1}_C.$$
Then there are two possibilities:\\
i) $X$ contains $(-2)$-curves.\\
Moreover, in this case we have $$T_1 = T_2 = T_3 = \mathbb{1}_X.$$
ii) $X$ belongs to a $3$-codimensional family of Coble surfaces.
\end{theorem}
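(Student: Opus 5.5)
By Corollary~\ref{comp}, $T_1 \circ T_2 \circ T_3 = (i_3' \circ i_2') \circ (i_1' \circ i_3') \circ (i_2' \circ i_1')$. I will rewrite this as a conjugate of $(i_3' \circ i_2' \circ i_1')^2$, or of a similar word, using that each $i_k'$ is an involution. Restricting to $C \simeq \mathbb{P}^1$, the hypothesis becomes $(\sigma_3 \circ \sigma_2 \circ \sigma_1)^2 = \mathbb{1}_C$, possibly after a cyclic reordering of the indices, which is harmless by the symmetry of the setup. By Corollary~\ref{lift}~$ii)$, $\sigma_k$ is the involution of the $g_2^1$ spanned by $e_i := (E_i)|_C$ and $e_j := (E_j)|_C$, where $\{i,j,k\} = \{1,2,3\}$. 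I then split into two cases: $\sigma_3\circ\sigma_2\circ\sigma_1 \neq \mathbb{1}_C$, and $\sigma_3\circ\sigma_2\circ\sigma_1 = \mathbb{1}_C$.

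\emph{First case.} Proposition~\ref{propuno} says the fixed-point divisors $F_1, F_2, F_3$ of the $\sigma_k$ are linearly dependent. By Proposition~\ref{propdue}, $F_k$ is the Jacobian $J(A_i, A_j)$, where $A_i$ cuts $e_i$. I want to show this forces $A_1, A_2, A_3$ to be linearly dependent. Identify $H^0(\mathcal O_{\mathbb{P}^1}(2))$ with $\mathbb{C}^3$. The Jacobian $(A_i, A_j) \mapsto J(A_i, A_j)$ is an alternating bilinear map $\Lambda^2 \mathbb{C}^3 \to \mathbb{C}^3$, and it is $SL_2$-equivariant. Since $\Lambda^2 V \simeq V$ for the three-dimensional irreducible representation $V$, this map is the cross product up to a scalar. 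It is then a linear algebra identity that for independent $A_1, A_2, A_3$ the three cross products $A_2\times A_3$, $A_3\times A_1$, $A_1\times A_2$ are independent; indeed their determinant is $\det(A_1,A_2,A_3)^2$. So dependence of the $F_k$ forces $e_1, e_2, e_3$ to lie in a common $g_2^1$. Lemma~\ref{propzero}~$vi)$ then gives $(-2)$-curves on $X$, which proves the first assertion of $i)$.

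To get $T_1 = T_2 = T_3 = \mathbb{1}_X$ in this case, I use the explicit $(-2)$-curve $R$ from the proof of Lemma~\ref{propzero}~$vi)$. With $\mathcal E_1 = 3L - \sum_{s\ne 1} E_s$, its class is
$$R = H - E_1 - \mathcal E_1 = 3L - E_1 - \sum_{s\ge 4} E_s,$$
or else $R$ splits as a sum of two $(-2)$-curves. This class says that $p_1, p_4, \dots, p_{10}$ lie on a cubic singular at $p_1$, or, after splitting, that special collinearities or conconic positions occur among the nodes. I will translate this into the statement that the base points of the pencils $|-K_{Y_k}|$ are in special position. The aim is to show that the three lifted involutions $i_1', i_2', i_3'$ coincide: for instance, by showing they act the same way on the elliptic pencils $|\mathcal P_j|$, or by using Proposition~\ref{liftgeiser} to realize all three as lifts of one Geiser involution on the degree-$2$ Del Pezzo surface $Bl_{p_4,\dots,p_{10}}\mathbb{P}^2$. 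Once $i_1' = i_2' = i_3'$, every $T_j$, being a composition of two equal involutions, is the identity. \textbf{This is the step I expect to be the main obstacle.} The issue is converting the $(-2)$-class into a statement that the points $p_1, p_2, p_3$ are Geiser-fixed, or otherwise into equality of the involutions. I would first try the Geiser route, using Proposition~\ref{fixgeiser} and the fact that $p_1, p_2, p_3$ are nodes of curves in $|-K_Z|$.

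\emph{Second case.} Suppose $\sigma_3\circ\sigma_2\circ\sigma_1 = \mathbb{1}_C$. If $e_1, e_2, e_3$ are linearly dependent, we are back in the nodal situation of $i)$ by Lemma~\ref{propzero}~$vi)$. Otherwise Lemma~\ref{codtre} applies directly and places $X$ in a $3$-codimensional family, which is $ii)$.
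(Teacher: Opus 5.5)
Your structure matches the paper's proof. Concatenating the words of Corollary~\ref{comp} gives exactly $T_1T_2T_3=(i_3'\circ i_2'\circ i_1')^2$, so no conjugation or reordering is needed. After that you follow the same steps: the split on whether $\sigma_3\circ\sigma_2\circ\sigma_1$ is the identity, Proposition~\ref{propuno} with the Jacobians of Proposition~\ref{propdue}, Lemma~\ref{propzero}~$vi)$, and Lemma~\ref{codtre} for $ii)$.

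The one genuinely different step is how dependence of $F_1,F_2,F_3$ forces dependence of $A_1,A_2,A_3$. You identify the Jacobian as the unique, up to scalar, $SL_2$-covariant map $\Lambda^2V\to V$, i.e.\ a cross product for the invariant quadratic form, and the determinant identity follows. The paper instead writes out the coefficients of the $F_k$ and asserts $\det F=(\det A)^2$ by direct calculation. Your version explains why the identity holds and avoids the bookkeeping; both are fine.

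The gap is the second assertion of $i)$. You do not prove $T_1=T_2=T_3=\mathbb{1}_X$; you list strategies and flag the key step as an obstacle. Your class is also miscomputed: $H-E_1-\mathcal{E}_1=3L-2E_1-E_4-\cdots-E_{10}$, not $3L-E_1-\sum_{s\ge 4}E_s$, which has square $+1$ and so cannot be a $(-2)$-class. Your verbal reading, a cubic through $p_4,\dots,p_{10}$ singular at $p_1$, is the correct reading of the correct class.

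The obstacle dissolves using tools already in the paper:
\begin{itemize}
\item In the proof of Lemma~\ref{propzero}~$vi)$, all six lines $f(E_i)$, $f(\mathcal{E}_i)$ pass through the common point. So the construction works for every $i$ and gives effective $R_i=3L-2E_i-E_4-\cdots-E_{10}$ for $i=1,2,3$. You need all three, not just $R_1$.
\item Push $R_i$ down to $Z=Bl_{p_4,\dots,p_{10}}\mathbb{P}^2$. It becomes a member of $|-K_Z|$ with multiplicity at least $2$ at $p_i$. Only the class matters, so whether $R_i$ splits into two $(-2)$-curves is irrelevant. Proposition~\ref{fixgeiser} applies to any member of $|-K_Z|$, since each is the pullback of a line under $q$.
\item Proposition~\ref{fixgeiser} then gives $i_G(p_i)=p_i$ for $i=1,2,3$. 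This is exactly the conversion you were looking for.
\item By Proposition~\ref{liftgeiser}, each Bertini involution $i_k$ on $Y_k=Bl_{p_k}Z$ is the lift of $i_G$. Hence every $i_k'$ is the lift of $i_G$ to $X=Bl_{p_1,p_2,p_3}Z$, so $i_1'=i_2'=i_3'$.
\item Corollary~\ref{comp} then gives $T_j=\mathbb{1}_X$ for $j=1,2,3$.
\end{itemize}
This is the paper's argument, and with it your proposal is complete.
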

{\it Proof:} By Corollary~\ref{comp}, this is equivalent to $${(i_3' \circ i_2' \circ i_1')|_C}^2 = \mathbb{1}_C.$$ Let $E_1, E_2, E_3 \subset X$ be the exceptional divisors associated to $p_1, p_2, p_3$. Through the identification $C := \mathbb{P}^1$, the restricted divisors $(E_i)|_C$ are respectively defined by three quadratic polynomials $A_1, A_2, A_3 \in H^0 ({\cal O}_{\mathbb{P}^1} (2))$.\\
By Corollary~\ref{lift}, there exists a well - defined restriction $$\sigma_k := (i_k')|_C : C \to C,$$
corresponding to the $g_2^1$ on $C \simeq \mathbb{P}^1$ generated by $(E_i)|_C, (E_j)|_C$, for all permutations $(i, j, k)$ of $(1, 2, 3)$.\\ 
As a consequence, Proposition~\ref{propdue} states that the pairs of fixed points of $\sigma_1, \sigma_2, \sigma_3$ are respectively the Jacobian forms $$F_1 = J (A_2, A_3),$$
$$F_2 = J (A_1, A_3),$$ 
$$F_3 = J (A_1, A_2).$$
If we assume that $$\sigma_3 \circ \sigma_2 \circ \sigma_1 \ne \mathbb{1}_C,$$ we show that the case $i)$ of the statement is realized.\\
By Proposition~\ref{propuno}, the $F_i$'s are linearly dependent polynomials in $H^0 ({\cal O}_{\mathbb{P}^1} (2))$. If we write down explicitly $A_1, A_2, A_3 \in H^0 ({\cal O}_{\mathbb{P}^1} (2))$, say $$A_i = a_{i, 1} U^2 + a_{i, 2} U V + a_{i, 3} V^2,$$ then: 
$$F_1 = 2 (a_{2, 1} a_{3, 2} - a_{2, 2} a_{3, 1}) U^2 + 4 (a_{2, 1} a_{3, 3} - a_{2, 3} a_{3, 1}) U V + 2 (a_{2, 2} a_{3, 3} - a_{2, 3} a_{3, 2}) V^2,$$
$$F_2 = 2 (a_{1, 1} a_{3, 2} - a_{1, 2} a_{3, 1}) U^2 + 4 (a_{1, 1} a_{3, 3} - a_{1, 3} a_{3, 1}) U V + 2 (a_{1, 2} a_{3, 3} - a_{1, 3} a_{3, 2}) V^2,$$
$$F_3 = 2 (a_{1, 1} a_{2, 2} - a_{1, 2} a_{2, 1}) U^2 + 4 (a_{1, 1} a_{2, 3} - a_{1, 3} a_{2, 1}) U V + 2 (a_{1, 2} a_{2, 3} - a_{1, 3} a_{2, 2}) V^2.$$ 
Let $A$ be the matrix whose entries are the coefficient of the $A_i$'s with respect to the standard basis $U^2, U V, V^2$ of $H^0 ({\cal O}_{\mathbb{P}^1} (2))$, and let $F$ be the matrix whose entries are the coefficient of the $F_i$'s which we computed above. Then an explicit calculation proves that, up to a scalar factor in $\complex$, we have: $${\rm det\,} F = ({\rm det\,} A)^2.$$ Since the $F_i$'s are linearly dependent, the $A_i$'s are linearly dependent too. This means that the restrictions on $C$ of exceptional divisors $E_1, E_2, E_3$ belong to the same $g_2^1$, and by part $vi)$ of Lemma~\ref{propzero} we find that $X$ contains $(-2)$-curves.\\
Now we prove the second part of the statement: part $vi)$ of Lemma~\ref{propzero} states that the three divisors $R_i := H - E_i - {\cal E}_i$ are effective. Writing them down explicitly, we get $$R_i = 3 L - 2 E_i - E_4 - \cdots - E_{10}, \quad i = 1, 2, 3.$$ Let $Z$ be the Del Pezzo surface of degree $2$ given by $$Z := Bl_{p_4, \ldots, p_{10}} \mathbb{P}^2.$$ The effectiveness of the $R_i$'s gives three members of $|- K_Z|$ which are singular at $p_1, p_2, p_3$ respectively. By Proposition~\ref{fixgeiser} we know that $p_1, p_2, p_3$ are fixed by the Geiser involution $i_G : Z \to Z$, and by Proposition~\ref{liftgeiser} we know then that the Bertini involutions $i_k$ on the Del Pezzo surfaces $Y_k = Bl_{p_k} Z$ are all lifts of $i_G$. By definition $i_k' : X \to X$ is the lift of $i_k$, thus $i_1' = i_2' = i_3' : X \to X$ all coincide with the total lift of $i_G$. Finally, applying Corollary~\ref{comp}, we get $T_1 = T_2 = T_3 = \mathbb{1}_X$.\\
This completes the proof of case $i)$. For case $ii)$, we can thus assume that $$\sigma_3 \circ \sigma_2 \circ \sigma_1 = \mathbb{1}_C$$ and also that the restrictions $$(E_1)|_C, (E_2)|_C, (E_3)|_C$$ are linearly independent. But in this case we just need to apply the previous Lemma~\ref{codtre}. $\square$

 %\newpage
\phantomsection
\addcontentsline{toc}{section}{References}
\bibliographystyle{plain}
%\bibliography{REF}

\end{document}